\documentclass[10pt]{article}
 \usepackage{amsmath,amsfonts,amssymb,amsthm}
 \usepackage{hyperref}

 \theoremstyle{plain}
 \newtheorem{theorem}{Theorem}[section]
 \newtheorem{lemma}[theorem]{Lemma}
 
 \newtheorem{proposition}[theorem]{Proposition}

 \theoremstyle{definition}

 \theoremstyle{remark}

 \title{Rate of asymptotic convergence near isolated singularity of G$_2$ manifold}

 \author{Gao Chen}

 \begin{document}

 \maketitle
 
 \begin{abstract}
   In this paper, a metric with G$_2$ holonomy and slow rate of convergence to the cone metric is constructed on a ball inside the cone over the flag manifold.
 \end{abstract}

 \section{Introduction}

 There are many papers about isolated conical singularities with special holonomy. Many of them require the isolated singularity to have polynomial rate of convergence to a cone \cite{Karigiannis}. In general, there is a well-developed theory for the analysis on isolated singularity with polynomial rate of convergence to a cone including \cite{Behrndt, Cheeger, JoyceSpecialLagrangian, Mazzeo, Pacini, Vertman}. See also \cite{CheegerTian, ColdingMinicozzi, DegeratuMazzeo, HauselHunsickerMazzeo, HeinSun, JoyceBook, LockhartMcOwen, Melrose, Wang} for related topics.

 Therefore, it is natural to ask, whether every isolated conical singularity has polynomial rate of convergence. Such kind of question was studied by Adams and Simon \cite{AdamsSimon}. Roughly speaking, a geometric object with isolated singularity has polynomial rate of convergence to the cone if and only if the deformation of the links of the cone is unobstructed. It is well known that a cone metric is G$_2$ if and only if the link is nearly-K\"ahler. As Foscolo \cite{Foscolo} proved that the deformation of nearly-K\"ahler structures on the flag manifold $M$ is obstructed, it would seems trivial to prove that there exists a metric with G$_2$ holonomy modelled on a cone with slow convergence rate. However, this is not the case and there are still several problems to study:

 First of all, after investigating the paper of Adams and Simon, a metric $g$ with G$_2$ holonomy is a ``cone" in the sense of Adams-Simon if $L_{r\frac{\partial}{\partial r}}g=2g$.
 On the contrary, $g$ is called a cone metric in the usual sense if in addition $g(\frac{\partial}{\partial r},\frac{\partial}{\partial r})=1$. Therefore, the deformation space of links of G$_2$ ``cones" in the sense of Adams-Simon is larger than the deformation space of nearly-K\"ahler structures. Secondly, there exist infinite-dimensional symmetries induced by the diffeomorphism group.

 In order to solve the problems, this paper will start from the G$_2$ cone metric induced by $\phi$ on the cone $CM$ over the flag manifold $M$ and then use the method of Adams and Simon \cite{AdamsSimon} to solve the equation
 \[\pi_{14}^{\phi}(*_\phi d*_{\phi+d\xi}(\phi+d\xi)+\tfrac{3}{2}dd^*_\phi\xi)=0,\]
 with boundary conditions $d^*_\phi\xi|_{r=1}=0$ and $r_0^{-3}||\xi||_{C^{k,\alpha}_{r^{-2}dr^2+h}(\{r_0/e<r<r_0\})}\rightarrow 0$ as $r_0\rightarrow0$. In order to work transversally to the diffeomorphisms, $\xi$ will be restricted to $\Omega^{2,\phi}_{14}$ since the tangent space to the diffeomorphism orbit of $\phi$ is
 \[L_X\phi=d(X\lrcorner\phi)\in d\Omega^{2,\phi}_7.\]\
 See next section for the decomposition of forms on a manifold with G$_2$ holonomy.
 
 There are two major steps to solve the equation:

 (1) Find out the infinitisimal deformation space and show that the linearized operator is invertible in the perpendicular space of the infinitisimal deformation space of links of G$_2$ ``cones" in the sense of Adams-Simon. Note that Lemma 2 of \cite{AdamsSimon} can not be used directly because of the difference in the boundary conditions in order to solve the second problem.

 (2) Study the obstruction term in the deformation of links of G$_2$ ``cones" in the sense of Adams-Simon. If the obstruction term is non-zero, then it is possible to construct a solution $\xi$ with slow convergence rate to 0. Note the Foscolo's result can not be used directly because of the first problem.

 The first step will be done in Section 3 and the second step will be done in Section 4.

 In Section 5, the following main theorem will be proved:

 \begin{theorem}
 There exists $\xi\in\Omega^{2,\phi}_{14}(B_1)$ such that
 \[d(\phi+d\xi)=d*_{\phi+d\xi}(\phi+d\xi)=0,\]
 \[d^*_\phi\xi=0,\]
 and $(-\ln r)(r^{-3}F_r^*(\phi+d\xi)-\phi)$ converges to a non-zero limit in $C^{k,\alpha}_{B_1\setminus B_{1/2}}$ when $r\rightarrow0$, where $F_a(r,x)=(ar,x):CM\rightarrow CM$. It induces a metric on $B_1\subset CM$ with G$_2$ holonomy whose rate of convergence to the cone metric is $(-\ln r)^{-1}$.
 \end{theorem}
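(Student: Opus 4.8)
The plan is to follow the method of Adams and Simon \cite{AdamsSimon}, using Section 3 for the linear theory and Section 4 for the non-vanishing of the obstruction. First I would pass to the cylindrical variable $t=-\ln r\in(0,\infty)$, so that $r\to0$ becomes $t\to\infty$, $r=1$ becomes $t=0$, and the conformally rescaled cone metric is the cylinder $dt^2+h$ on $M\times(0,\infty)$. Because the maps $F_a$ are symmetries of the torsion-free $G_2$ equation, multiplying $\pi_{14}^\phi(*_\phi d*_{\phi+d\xi}(\phi+d\xi)+\tfrac32 dd^*_\phi\xi)=0$ by $r^2$ and using $r\partial_r=-\partial_t$ turns it into an \emph{autonomous} quasilinear equation
\[\ddot u+c\,\dot u+\mathcal L_M u=Q(u,\dot u,\nabla_M u),\]
where $u=u(t)$ is the scale-invariant rescaling of $\xi\in\Omega^{2,\phi}_{14}$, viewed as a path in a Banach space of sections of $(\Lambda^2\oplus\Lambda^1)T^*M$ over the link $M$, $c\neq0$ is the constant produced by the conical weight in dimension $7$, $\mathcal L_M$ is a second-order elliptic operator on $M$ that is formally self-adjoint (the torsion-free $G_2$ equation being the Euler--Lagrange equation of Hitchin's volume functional restricted transversally to $\Omega^2_{14}$), $Q$ collects the terms that are at least quadratic, and $u\equiv0$ corresponds to the cone $\phi$; the condition $d^*_\phi\xi|_{r=1}=0$ becomes a boundary condition on $u$ at $t=0$, and the prescribed decay becomes $u(t)\to0$ as $t\to\infty$.

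By Section 3, the space $K$ of $t$-independent solutions of the linearization --- the infinitesimal deformations of the $G_2$ ``cone'' in the sense of Adams--Simon, taken transverse to the diffeomorphism orbit, i.e. inside $\Omega^2_{14}$, which is strictly larger than Foscolo's space of nearly-K\"ahler deformations --- is finite-dimensional, and $\partial_t^2+c\,\partial_t+\mathcal L_M$ is invertible on the $L^2$-orthogonal complement of $K$ with a uniform spectral gap, also under the boundary condition $d^*_\phi\xi|_{t=0}=0$ (the modified boundary data being the reason Lemma~2 of \cite{AdamsSimon} cannot be quoted verbatim). Writing $u(t)=a(t)+v(t)$ with $a(t)\in K$ and $v(t)\perp K$, one solves the $v$-equation for $v=v[a]$, which is quadratically small in $a$ and decays exponentially transverse to $K$, and is left with the finite-dimensional reduced equation
\[\ddot a+c\,\dot a=\Pi_K Q\bigl(a+v[a],\dot a+\dot v[a],\nabla_M(a+v[a])\bigr)=\Pi_K Q_2(a,a)+O\bigl(|a|^3+|a||\dot a|+\cdots\bigr)\]
for $a\colon(0,\infty)\to K$, where $Q_2$ is the quadratic part of $Q$ and $\Pi_K$ is the $L^2$-projection onto $K$; this is the center-manifold reduction of \cite{AdamsSimon}.

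Now Section 4 shows that the obstruction map $\sigma\mapsto\Pi_K Q_2(\sigma,\sigma)$ on $K$ is not identically zero, transporting Foscolo's obstructedness of the nearly-K\"ahler deformations of the flag manifold \cite{Foscolo} to this enlarged, gauge-fixed setting. Choosing $\hat\sigma\in K$ with $\Pi_K Q_2(\hat\sigma,\hat\sigma)=b\,\hat\sigma$ and $b\neq0$, and looking for $a(t)=f(t)\hat\sigma+(\text{lower order})$ with $f\to0$, the reduced equation reads $\ddot f+c\dot f=bf^2+\cdots$; since for $f\sim C/t$ one has $\ddot f=O(t^{-3})$ whereas $\dot f,f^2=O(t^{-2})$, the dominant balance $c\dot f=bf^2$ forces $f(t)\sim-\tfrac cb\,t^{-1}$. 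The leading obstruction being \emph{quadratic} is precisely why the rate is $t^{-1}=(-\ln r)^{-1}$; a cubic or higher obstruction would have produced a faster power of $1/(-\ln r)$. A contraction-mapping argument in the H\"older space of sections weighted by $t^{-1}$ --- inverting $\partial_t^2+c\partial_t+\mathcal L_M$ on the complement of $K$ with an exponential weight, solving the scalar obstruction ODE on $K$ around the approximate solution $f_0(t)=-\tfrac cb\,t^{-1}$, and using the residual freedom at $t=0$ to enforce $d^*_\phi\xi|_{r=1}=0$ --- is the analogue of Lemma~2 of \cite{AdamsSimon} with the modified boundary condition from Section 3, and produces $\xi\in\Omega^{2,\phi}_{14}(B_1)$ solving the displayed equation with $d^*_\phi\xi|_{r=1}=0$ and $u(t)=t^{-1}(-\tfrac cb\,\hat\sigma+o(1))$; unwinding the rescaling, this is exactly the assertion that $(-\ln r)\bigl(r^{-3}F_r^*(\phi+d\xi)-\phi\bigr)$ converges to a non-zero limit (essentially $d\hat\sigma$, up to a non-zero constant) in $C^{k,\alpha}_{B_1\setminus B_{1/2}}$.

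It remains to upgrade the solution. The term $\tfrac32 dd^*_\phi\xi$ is the DeTurck term for this problem: a Bianchi-type identity for the torsion shows that $d^*_\phi\xi$ satisfies a homogeneous linear elliptic equation, and with vanishing data at $r=1$ and the prescribed decay at $r=0$ this forces $d^*_\phi\xi\equiv0$, so the equation reduces to $\pi_{14}^\phi(*_\phi d*_{\phi+d\xi}(\phi+d\xi))=0$. Since $\phi+d\xi$ is closed its torsion lies in $\Omega^2_{14}(\phi+d\xi)$, and for $\xi$ small $\pi_{14}^\phi\circ(*_\phi *_{\phi+d\xi})$ restricts to an isomorphism $\Omega^2_{14}(\phi+d\xi)\to\Omega^2_{14}(\phi)$, so the torsion vanishes: $d(\phi+d\xi)=d*_{\phi+d\xi}(\phi+d\xi)=0$. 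Interior elliptic regularity makes $\xi$ and the induced metric $g_{\phi+d\xi}$ smooth on $B_1$, with rate of convergence $(-\ln r)^{-1}$ to the cone metric since the metric is an algebraic function of the $3$-form and $d\hat\sigma$ is not a pure gauge direction (we work transverse to diffeomorphisms); and the holonomy is exactly $G_2$ because the metric is Ricci-flat and, on small annuli, $C^{k,\alpha}$-close to the cone over the flag manifold, which is a $G_2$-cone of full holonomy (it is not flat $\mathbb R^7$, the link not being the round $S^6$), so no local de Rham or Calabi--Yau splitting is available. I expect the main obstacle to be closing the contraction estimate in the $t^{-1}$-weighted norm --- playing the exponential decay of the component transverse to $K$ against the merely polynomial decay along $K$ while controlling the derivative-dependent nonlinearity $Q(u,\dot u,\nabla_M u)$ --- together with verifying that the selected branch genuinely activates the quadratic obstruction, i.e. that $b\neq0$ along $\hat\sigma$, so that the limit is non-zero; the reformulation issues (the enlarged deformation space, the diffeomorphism symmetry, and the resulting need for the boundary condition $d^*_\phi\xi|_{r=1}=0$) are already dealt with in Sections 3 and 4.
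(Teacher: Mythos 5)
Your overall scheme matches the paper's: pass to $t=-\ln r$, use the invertibility of the linearized operator on $\mathcal{D}^\perp$ from Section~3 (Lemma~\ref{Perpendicular}), use the nondegenerate quadratic obstruction $Q(v,v)=v$ from Section~4 (Lemma~\ref{Obstruction}) to set up the leading-order ansatz $\xi\sim 7r^3(T+t)^{-1}v$, and close a contraction-mapping argument in weighted H\"older spaces; the paper writes this as $\xi=7r^3((T+t)^{-1}v+w)$ with $w=w^T+w^\perp\in\mathcal{D}\oplus\mathcal{D}^\perp$ and solves the fixed point $w=F(w)=L_1(R_4^\perp(w))+L_2(\cdots)$, which is the same idea as your center-manifold reduction. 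One cosmetic note: the paper does not use an exponential weight transverse to $\mathcal{D}$---all estimates are in the polynomially weighted norms $\|\cdot\|_{C^{k,\alpha}_q}$, with $w^\perp$ controlled in weight $q+1/2$ and $w^T$ in weight $q$; the spectral gap gives uniform invertibility, not exponential decay in these norms.

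The real gap is the ``upgrade'' step. You assert that $\tfrac32 d d^*_\phi\xi$ is a DeTurck term and that ``a Bianchi-type identity for the torsion shows that $d^*_\phi\xi$ satisfies a homogeneous linear elliptic equation,'' concluding $d^*_\phi\xi\equiv0$ first and deducing the torsion-free equations afterwards. No such Bianchi-type identity is supplied, and the paper does not proceed in this order. Instead it first proves $d_\phi d^*_\phi\xi=0$: the boundary condition $d^*_\phi\xi|_{r=1}=0$ and the decay as $r\to0$ make Stokes give $\int_{B_1} d_\phi d^*_\phi\xi\wedge d_\phi d^*_\phi\xi\wedge\phi=0$, which via the pointwise $G_2$ identity $\alpha\wedge\alpha\wedge\phi=(2|\pi_7^\phi\alpha|^2-|\pi_{14}^\phi\alpha|^2)\mathrm{Vol}_\phi$ yields $2\|\pi_7^\phi d_\phi d^*_\phi\xi\|^2=\|\pi_{14}^\phi d_\phi d^*_\phi\xi\|^2$; a second integration by parts gives $(*_\phi d_\phi*_{\phi+d_\phi\xi}(\phi+d_\phi\xi),d_\phi d^*_\phi\xi)=0$; combining these two with the solved equation $\pi_{14}^\phi(*_\phi d_\phi*_{\phi+d_\phi\xi}(\phi+d_\phi\xi))=-\tfrac32\pi_{14}^\phi d_\phi d^*_\phi\xi$ and Bryant's fact that $d_\phi*_{\phi+d_\phi\xi}(\phi+d_\phi\xi)\in\Omega^{5,\phi+d_\phi\xi}_{14}$ (so for small $\xi$ the $\pi_7^\phi$-component of $*_\phi d_\phi*_{\phi+d_\phi\xi}(\phi+d_\phi\xi)$ is $\epsilon$-small relative to the $\pi_{14}^\phi$-component) forces $\|\pi_{14}^\phi d_\phi d^*_\phi\xi\|\le\frac{\epsilon}{\sqrt2}\|\pi_{14}^\phi d_\phi d^*_\phi\xi\|$, hence $d_\phi d^*_\phi\xi=0$. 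Only then does torsion-freeness follow from the equation, and only at the very end does $\|d^*_\phi\xi\|^2=(d_\phi d^*_\phi\xi,\xi)=0$ give $d^*_\phi\xi=0$. Your argument would need this integration-by-parts mechanism (or a fully worked-out substitute) in place of the unproved Bianchi claim; as written, the crucial a posteriori gauge-vanishing is asserted rather than proved.
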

 
 \*

  \noindent{\bf Acknowledgement:}  The author is grateful to the insightful and helpful discussions with Xiuxiong Chen, Lorenzo Foscolo, Song Sun and Yuanqi Wang.

 \section{Notations and definitions}

 Even though Foscolo's result can not be applied directly, his notations and several facts in his paper \cite{Foscolo} can still be used.

 $M$ will be the flag manifold $\mathrm{SU}(3)/T^2$. The Lie algebra $\mathfrak{u}_3$ is spanned by the following matrices:
 \[h_1=iE_{11}, h_2=iE_{22}, h_3=iE_{33},\]
\[e_1=E_{12}-E_{12}, e_3=E_{31}-E_{13}, e_5=E_{23}-E_{23}\]
\[e_2=i(E_{12}+E_{12}), e_4=i(E_{31}+E_{13}), e_6=i(E_{23}+E_{23}),\]
where $E_{ij}$ is the $3\times3$ matrix with 1 in position $ij$ and all other entries zero.
Compared to \cite{Foscolo} and \cite{MoroianuSemmelmann}, the sign of $e_3$ has been changed in order to simplify the calculation. There exists a metric $h$ on $\mathfrak{u}_3$ given by making the frames $\{e_i,\sqrt{2}h_j\}$ orthonormal. It induces to a metric on $M$.
Let $\{e^i,h^j\}$ be the dual basis and $e^{i_1,...i_m}=e^{i_1}\wedge...\wedge e^{i_m}$. They can be extended to left-invariant forms on $\mathrm{U}(3)$.
The left-invariant 2-form \[\omega=e^{12}+e^{34}+e^{56}\] on $\mathrm{SU}(3)$ is projectable to $M$.
It induces an almost complex structure $J$ on $M$.
The (3,0) form $\Omega$ on $M$ is induced from the left-invariant 3-form
\begin{align*}
\Omega&=(e^2-iJ^*e^2)\wedge(e^4-iJ^*e^4)\wedge(e^6-iJ^*e^6)\\
&=(e^{246}-e^{136}-e^{235}-e^{145})+i(e^{135}-e^{245}-e^{146}-e^{236}).
\end{align*}
on $\mathrm{SU}(3)$.
It is easy to verify that \cite{MoroianuSemmelmann}
\[d\omega=3\mathrm{Re}\Omega, d\mathrm{Im}\Omega=-2\omega^2,\]
in other words, $(M,h,\Omega,\omega)$ is a nearly-K\"ahler manifold.
 It provides orthogonal decompositions on forms:
 \[\Lambda^2\mathbb{R}^6=\Lambda^2_1\oplus\Lambda^2_6\oplus\Lambda^2_8,\]
 where $\Lambda^2_1=\mathbb{R}\omega, \Lambda^2_6=\{X\lrcorner\mathrm{Re}\Omega\}$ and $\Lambda^2_8$ is the space of primitive (1,1)-forms.
 Following the notation of \cite{Foscolo} and \cite{MoroianuSemmelmann}, 1-forms will be identified with vector fields using the the metric $h$. For example, $Je^1=Je_1=e_2=e^2.$ On the contrary, the dual of $J$ acts on 1-form by $J^*=-J$.

 Given any $\zeta\in\mathfrak{su}(3)$, define functions $x_i$ and $h_i$ on $\mathrm{SU}(3)$ by \[x_i(u)=h(\mathrm{Ad}_{u^{-1}} \zeta,e_i), v_i(u)=h(\mathrm{Ad}_{u^{-1}} \zeta,h_i).\]
 The functions $v_i$ are projectable to $M$ but $x_i$ are not. However, some functions of $x_i$ may be projectable to $M$.
 Let \[\eta=v_1e^{56}+v_2e^{34}+v_3e^{12},\]
 then \[\begin{split}d\eta=(x_4e^3-x_3e^4-x_2e^1+x_1e^2)\wedge e^{56}+(x_2e^1-x_1e^2-x_6e^5+x_5e^6)\wedge e^{34}\\+(x_6e^5-x_5e^6-x_4e^3+x_3e^4)\wedge e^{12},\end{split}\]
 In \cite{MoroianuSemmelmann}, Moroianu and Semmelmann proved that the space of all co-closed primitive (1,1)-form on $M$ satisfying $\triangle\eta=12\eta$ is exactly the space of $\eta$ for all $\zeta\in\mathfrak{su}(3)$.

 According to \cite{Foscolo}, the space $\mathcal{K}$ of Killing vector fields is given by $K$ satisfying
  \[
\left\{ \begin{array}{l}
         d^*K=d^*(JK)=0,\\
         \alpha(dJK)=-6K,
         \end{array}\right.
\]
where $\alpha$ is the operator dual to $X\rightarrow X\lrcorner\mathrm{Re}\Omega$.
It is easy to prove the following propositions on the flag manifold $M$:

\begin{proposition}
If $K$ is a 1-form on the $M$ satisfying $\triangle(JK)=18JK$, then $K$ is a Killing vector field.
\label{Eigenvalue18}
\end{proposition}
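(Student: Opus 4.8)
The plan is to verify the three conditions that, by Foscolo's description recalled above, characterize membership in $\mathcal{K}$: that $d^*(JK)=0$, that $d^*K=0$, and that $\alpha(dJK)=-6K$. The hypothesis $\triangle(JK)=18JK$ will be fed into the Weitzenb\"ock machinery on the nearly-K\"ahler $6$-manifold $(M,h,\omega,\Omega)$, using the structure equations $d\omega=3\mathrm{Re}\Omega$, $d\mathrm{Im}\Omega=-2\omega^2$, the Einstein condition $\mathrm{Ric}=5h$, and the algebraic relation $\alpha(X\lrcorner\mathrm{Re}\Omega)=2X$ together with the fact that $\alpha$ annihilates $\Lambda^2_1\oplus\Lambda^2_8$. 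The easiest of the three conditions is the co-closedness of $JK$: since $\triangle$ commutes with $d^*$, the function $d^*(JK)$ satisfies $\triangle\big(d^*(JK)\big)=18\,d^*(JK)$; but $(M,h)$ is, up to scale, the normal homogeneous metric on $\mathrm{SU}(3)/T^2$ (indeed $h|_{\mathfrak m}=-\tfrac{1}{12}B$ on $\mathfrak m=\mathrm{span}\{e_i\}$), so the positive spectrum of the scalar Laplacian is $\{12,24,32,48,\dots\}$, read off from the Casimir eigenvalues of the representations $V_{(p,q)}$ of $\mathrm{SU}(3)$ admitting a $T^2$-fixed vector, i.e.\ those with $p\equiv q\pmod 3$. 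In particular $18$ does not occur, so $d^*(JK)=0$.

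Applying $d$ to the hypothesis then gives $\triangle(dJK)=18\,dJK$, so the $2$-form $\beta:=dJK$ is a Laplace eigenform with eigenvalue $18$, and I would decompose $\beta=\beta_1+\beta_6+\beta_8\in\Lambda^2_1\oplus\Lambda^2_6\oplus\Lambda^2_8$. The crux is to show $\beta_1=\beta_8=0$, that $\alpha(\beta)=-6K$ (which is exactly Foscolo's equation), and that $d^*K=0$. Substituting $d^*(JK)=0$ into the first-order identities relating $d$, $d^*$, $J$ and the contractions with $\omega$ and $\mathrm{Re}\Omega$ turns $\triangle(JK)=18JK$ into a closed linear system for $\beta_1,\beta_6,\beta_8$ in which each $\Lambda^2_k$-component carries its own scalar; matching these scalars against the value $18$ forces $\beta_1=\beta_8=0$ and pins $\beta_6=-3\,K\lrcorner\mathrm{Re}\Omega$, i.e.\ $\alpha(dJK)=-6K$, while the vanishing of $\beta_1$ (together with the same identities) yields $d^*K=0$. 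Equivalently, one may argue by a dimension count: the computation above shows any such $K$ lies in an $\mathrm{SU}(3)$-subrepresentation of $\Omega^1(M)$ contained in the $V_{(1,1)}$-isotypic piece, and a direct computation with the structure equations for the $e^i$ (equivalently, Moroianu--Semmelmann's analysis of the Hermitian Laplacian) shows that the $18$-eigenspace of $\triangle$ on $\Omega^1(M)$ has dimension exactly $\dim\mathfrak{su}(3)=8$; since $\{JK'\,:\,K'\in\mathcal K\}$ is an $8$-dimensional subspace of it, the two coincide, so $J$ maps $\mathcal K$ onto the $18$-eigenspace and the Proposition follows.

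The step I expect to be the real work is the constant bookkeeping in that linear system --- checking that $18$ is precisely the eigenvalue that annihilates the $\Lambda^2_1$ and $\Lambda^2_8$ parts of $dJK$ and that produces the coefficient $-6$ --- together with the spectral input that $18$ is attained neither on functions nor on $\Omega^1(M)$ outside the Killing directions. The latter is where one cannot shortcut the representation theory: because the intrinsic torsion of the nearly-K\"ahler structure makes the Hodge Laplacian act non-diagonally within each $\mathrm{SU}(3)$-isotypic component of $\Omega^1(M)$, the relevant eigenvalue must be extracted from the explicit action of $\triangle$ (via the Weitzenb\"ock formula and the structure equations) on the $V_{(1,1)}$-isotypic subspace, rather than just from the base Casimir.
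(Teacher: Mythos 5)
Your opening move is correct and matches the paper: since $\triangle$ commutes with $d^*$, the function $d^*(JK)$ is an eigenfunction with eigenvalue $18$, and $18$ is not in the scalar spectrum, so $d^*(JK)=0$. (Minor warning: you list the positive spectrum as $\{12,24,32,48,\dots\}$, while the paper cites Moroianu--Semmelmann's $2(k(k+2)+l(l+2))=0,6,12,22,30,\dots$; both exclude $18$, but your list is not consistent with the paper's, so check your normalization.) Where the proposal has a genuine gap is in what follows. Your stated plan is to decompose $\beta=dJK=\beta_1+\beta_6+\beta_8$ and show $\beta_1=\beta_8=0$ by ``matching scalars in a closed linear system.'' That is not how the mechanism works, and indeed it is not what one should even aim at: on the flag manifold the operator $\alpha dJ$ does not act by a single scalar on the relevant subspace, and $\pi_8(dJK)=0$ is \emph{not} an intermediate fact one can isolate --- in the paper it is only a consequence of $K$ being Killing (Foscolo, Prop.~3.19), not a stepping stone to it.

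The paper's actual route, which your sketch does not anticipate, is: (i) take the $\Omega^3_6$ component of the Bianchi-type identity $ddJK=0$ to eliminate $d^*\pi_8(dJK)$ from the formula for $\triangle(JK)$; (ii) apply $-d^*J$ and the algebraic relation $d^*\alpha dJ=-4d^*$ to derive $14\,d^*K=d^*dd^*K$, so $d^*K=0$ since $14$ is not in the scalar spectrum (a second spectral exclusion your proposal does not mention); (iii) arrive at the \emph{quadratic} relation $\alpha dJ\alpha dJK+3\alpha dJK=18K$, whose two roots $3$ and $-6$ force the decomposition $K=K_3+K_{-6}$; (iv) eliminate $K_3$ by showing $d^*K_3=d^*JK_3=0$, hence $\triangle JK_3=18JK_3$, and then passing to the Hermitian Laplacian $\bar\triangle JK_3=\triangle JK_3+J\alpha dJK_3=21JK_3$, with $21$ absent from the $\bar\triangle$ spectrum. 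Step (iv) is the crux, and neither your ``matching scalars'' argument nor your dimension-count fallback supplies it: the dimension count (that the $18$-eigenspace of $\triangle$ on $\Omega^1$ is exactly $8$-dimensional) is essentially equivalent to the proposition and would itself require the computation you are trying to avoid. As written, the proposal correctly identifies the opening step and the target conditions but leaves the hard part --- steps (ii)--(iv), especially the Hermitian Laplacian argument --- unaddressed.
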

\begin{proof}
First of all, $\triangle d^*(JK)=18d^*(JK)$. In \cite{MoroianuSemmelmann}, Moroianu and Semmelmann proved that all possible eigenvalues for the Laplacian operator on functions on $M$ are $2(k(k+2)+l(l+2))=0,6,12,22,30,...$ So $d^*(JK)$ must vanish.

By \cite{Foscolo}, $dJK$ can be decomposed to:
\[dJK=-\frac{1}{3}(d^*K)\omega+\frac{1}{2}\alpha(dJK)\lrcorner\mathrm{Re}\Omega+\pi_8(dJK).\]
So \[0=ddJK=-\frac{1}{3}(dd^*K)\wedge\omega-(d^*K)\mathrm{Re}\Omega+\frac{1}{2}d(\alpha(dJK)\lrcorner\mathrm{Re}\Omega)+d\pi_8(dJK).\]
The $\Omega^3_6$ component of the previous equation is \cite{Foscolo}
\[0=-\frac{1}{3}(dd^*K)-\frac{1}{2}(\frac{1}{2}\alpha dJ\alpha dJK+3\alpha dJK)-\frac{1}{2}Jd^*\pi_8(dJK).\]
So
\begin{align*}
18JK&=\triangle(JK)\\
&=d^*d(JK)\\
&=\frac{1}{3}*d*[(d^*K)\omega]-\frac{1}{2}*d*[\alpha(dJK)\lrcorner\mathrm{Re}\Omega]+d^*\pi_8(dJK)\\
&=\frac{1}{3}*d[(d^*K)\frac{\omega^2}{2}]+\frac{1}{2}*d[J\alpha(dJK)\wedge\mathrm{Re}\Omega]+d^*\pi_8(dJK)\\
&=\frac{1}{3}*[(dd^*K)\wedge\frac{\omega^2}{2}]+\frac{1}{2}*[dJ\alpha(dJK)\wedge\mathrm{Re}\Omega]+d^*\pi_8(dJK)\\
&=\frac{1}{3}Jdd^*K+\frac{1}{2}J\alpha dJ\alpha dJK+d^*\pi_8(dJK)\\
&=Jdd^*K+J\alpha dJ\alpha dJK+3J\alpha dJK.
\end{align*}

Therefore, \[18d^*K=-d^*J(18JK)=d^*dd^*K+4d^*K,\]
using the formula \[d^*\alpha dJ=d^*(J\alpha dJ+4J)J=-4d^*,\]
which can be derived from Proposition 3.6.(v) of \cite{Foscolo}.
Since 14 is not the eigenvalue for the Laplacian operator on functions on $M$, $d^*K$ must vanish, too.
So \[\alpha dJ\alpha dJK+3\alpha dJK=18K.\]
In other words, $K$ can be written as $K=K_3+K_{-6}$, where $\alpha dJ K_3=3K_3$ and $\alpha dJ K_{-6}=-6K_{-6}.$
So \[d^*K_3=-\frac{1}{4}d^*\alpha dJK_3=-\frac{3}{4}d^*K_3,\]
and \[d^*JK_3=\frac{1}{3}d^*J\alpha dJ K_3=0.\]
Therefore $d^*K_3=d^*JK_3=0$.
In particular $\triangle JK_3=18JK_3$. Let $\bar\triangle$ be the Hermitian Laplace operator defined in \cite{MoroianuSemmelmann},
then \[\bar\triangle JK_3=\triangle JK_3+J\alpha dJK_3=21JK_3.\]
Possible $\bar\triangle$ eigenvalues are also $2(k(k+2)+l(l+2))=0,6,12,22,30,...$ \cite{MoroianuSemmelmann}. So $K_3=0$. It follows that $K=K_{-6}$ is a Killing vector field.
\end{proof}

\begin{proposition}
If $A$ is a 1-form on $M$ satisfying $d^*(JA)=0$, then $JA=d^*\eta$ has a solution $\eta\in\Omega^2_8$ if and only if $A$ is perpendicular to the space of Killing vector fields.
\label{Imageofd*}
\end{proposition}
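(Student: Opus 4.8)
The plan is to prove the two implications separately: the ``only if'' direction is a short computation, and the ``if'' direction is a Hodge‑theoretic argument whose real content is the identification of the cokernel of $d^*|_{\Omega^2_8}$, after which the spectral gap of $M$ finishes the proof. For ``only if'', suppose $JA=d^*\eta$ with $\eta\in\Omega^2_8$ and let $K\in\mathcal K$. Since $J$ is an $h$-isometry on $1$-forms,
\[\langle A,K\rangle = \langle JA,JK\rangle = \langle d^*\eta,JK\rangle = \langle \eta,dJK\rangle .\]
Because $K$ is Killing, $d^*K=0$ and $\alpha(dJK)=-6K$, so the decomposition of $dJK$ recalled in the proof of Proposition~\ref{Eigenvalue18} collapses to $dJK=-3\,K\lrcorner\mathrm{Re}\Omega\in\Omega^2_6$; as $\Omega^2_8\perp\Omega^2_6$ the pairing vanishes, so $A\perp\mathcal K$.

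For ``if'', I would start from $d^*d^*\eta=0$: the image $d^*(\Omega^2_8)$ consists of coclosed $1$-forms, and since $M=\mathrm{SU}(3)/T^2$ is simply connected we have $b_1(M)=0$, so coclosed $1$-forms are coexact and form exactly the orthogonal complement of the exact $1$-forms. The formal adjoint of $d^*|_{\Omega^2_8}$ is $\pi_8\circ d$; its range is closed on the compact $M$, so $d^*(\Omega^2_8)=\{\beta\in\Omega^1:\pi_8 d\beta=0\}^{\perp}$. The space $\{\beta:\pi_8 d\beta=0\}$ contains all exact forms, and a Hodge decomposition splits it as $\{\text{exact}\}\oplus\{\beta:d^*\beta=0,\ \pi_8 d\beta=0\}$. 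Now if $d^*(JA)=0$ then $JA$ is coclosed, hence $\langle JA,df\rangle=\langle d^*(JA),f\rangle=0$ for all $f$, so $JA\in d^*(\Omega^2_8)$ iff $JA$ is orthogonal to every coclosed $\beta$ with $\pi_8 d\beta=0$. Since $\langle JA,JK\rangle=\langle A,K\rangle$, the whole proposition reduces to the claim
\[\{\beta\in\Omega^1:d^*\beta=0,\ \pi_8 d\beta=0\}=J\mathcal K .\]

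The inclusion $J\mathcal K\subseteq\{\,\cdot\,\}$ is exactly the computation above: $d^*(JK)=0$ and $dJK=-3K\lrcorner\mathrm{Re}\Omega\in\Omega^2_6$, so $\pi_8 dJK=0$. For the reverse inclusion, take $\beta$ coclosed with $\pi_8 d\beta=0$, so $d\beta=c\,\omega+X\lrcorner\mathrm{Re}\Omega$ for a function $c$ and a vector field $X$; the key point is to show $c\equiv 0$. A first consistency check comes from wedging $d\beta$ with $\tfrac12\omega^2=*\omega$: since $\omega\wedge\mathrm{Re}\Omega=0$ and $\Omega^2_6$ is $\omega$-traceless, $\int_M d\beta\wedge *\omega=0$ forces $\int_M c=0$. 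To get $c\equiv 0$ I would differentiate, decompose $0=d(d\beta)$ into the $\mathrm{SU}(3)$-irreducible summands of $\Lambda^3$, and feed in the nearly-K\"ahler identities of \cite{Foscolo} (for $d(X\lrcorner\mathrm{Re}\Omega)$, and for the $\Omega^2_1$-part of $d\beta$ expressed through $d^*(J\beta)$); I expect this to force $c$ to satisfy $\triangle c=14c$, which is impossible because $14$ is not among the Laplace eigenvalues $2(k(k+2)+l(l+2))=0,6,12,22,30,\dots$ on functions of $M$ \cite{MoroianuSemmelmann}. Once $c=0$ one also gets $d^*(J\beta)=0$ and $d\beta=X\lrcorner\mathrm{Re}\Omega\in\Omega^2_6$; setting $K:=-J\beta$, the conditions $d^*K=d^*(JK)=0$ are immediate, and the last condition $\alpha(dJK)=-6K$ in Foscolo's characterization of $\mathcal K$ follows from the same $\Lambda^3$-identities applied to the closed form $X\lrcorner\mathrm{Re}\Omega$ (which pin down $X=3J\beta$), so $\beta=JK\in J\mathcal K$.

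The main obstacle I anticipate is the step $c\equiv 0$: it requires organizing the decomposition of $d(d\beta)=0$ into irreducible pieces and tracking the precise constants in the nearly-K\"ahler commutator formulas accurately enough to land on the non-resonant eigenvalue $14$ — the same ``missing eigenvalue'' of $M$ that drives Proposition~\ref{Eigenvalue18} — rather than on a value that actually occurs in the spectrum. (Alternatively, since $M$ is homogeneous, the reverse inclusion can be attacked by decomposing all relevant spaces into $\mathrm{SU}(3)$-isotypic components and solving the resulting finite-dimensional linear systems, which also disposes of the closed-range claim for $\pi_8\circ d$.)
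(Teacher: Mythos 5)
Your overall strategy matches the paper's: reduce the statement, by Hodge theory and closed range on the compact $M$, to the identification of the cokernel of $d^*\colon\Omega^2_8\to\Omega^1$ among coclosed $1$-forms, and then prove that this cokernel is exactly $J\mathcal K$ by differentiating $d\beta$ once more and exploiting the nearly-K\"ahler identities together with the spectral gap of $M$. The paper carries out precisely this, just phrased directly as ``$JA\perp d^*(\Omega^2_8)\Rightarrow A\in\mathcal K$'' without spelling out the Hodge decomposition. However, there are several concrete gaps in your sketch.

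\emph{Only-if direction.} You assert that, for $K\in\mathcal K$, the decomposition of $dJK$ ``collapses to $dJK=-3\,K\lrcorner\mathrm{Re}\Omega$'' from $d^*K=0$ and $\alpha(dJK)=-6K$ alone. That decomposition still has a third summand $\pi_8(dJK)$, and neither of the two conditions you invoke makes it vanish. The fact $\pi_8(dJK)=0$ for Killing $K$ is a nontrivial input (Proposition~3.19 of Foscolo), which the paper cites explicitly; your computation $\langle A,K\rangle=\langle\eta,dJK\rangle=\langle\eta,\pi_8(dJK)\rangle$ is correct but needs this extra fact to close.

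\emph{If direction, the eigenvalue.} Carrying out the $\Omega^3_6$-component of $d(d\beta)=0$, with $\beta=JA$ and $c=-\tfrac13 d^*A$ the $\Omega^2_1$-coefficient, and using $d^*\alpha dJ=-4d^*$, one lands on $d^*dd^*A=6\,d^*A$, i.e.\ $\triangle c=6c$ --- not $14c$. You are likely conflating this with Proposition~\ref{Eigenvalue18}, where the analogous manipulation \emph{does} produce $\triangle(d^*K)=14\,d^*K$. This matters because $6=2(k(k{+}2)+l(l{+}2))$ for $(k,l)=(1,0)$ \emph{is} on the Moroianu--Semmelmann list, so you cannot dispose of it merely by pointing to that list as you propose for $14$. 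The paper instead rules out $6$ by Lichnerowicz--Obata: a nearly-K\"ahler $6$-manifold has $\mathrm{Ric}=5g$, so $\lambda_1\ge 6$ with equality only for $S^6$, and $M$ is not $S^6$. (Representation-theoretically one can also check the representation with Casimir $6$ has no $T^2$-fixed vector, so it has multiplicity zero in $L^2(M)$; but some such argument is required.)

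\emph{If direction, after $c=0$.} Having $d^*A=0$ only gives $\tfrac12\alpha dJ\alpha dJ A+3\alpha dJ A=0$, i.e.\ $A$ splits as $A_0+A_{-6}$ with $\alpha dJ$-eigenvalues $0$ and $-6$. Your sketch jumps from $c=0$ to ``$\alpha(dJK)=-6K$ follows,'' which silently discards the eigenvalue-$0$ piece. The paper kills $A_0$ by an integral/Bochner step: $\alpha dJ A_0=0$ forces $\int_M dJA_0\wedge dJA_0\wedge\omega=-\|dJA_0\|^2$ to vanish, so $JA_0$ is harmonic and hence zero by Bochner. Without this step the reverse inclusion $\{\beta:d^*\beta=0,\ \pi_8 d\beta=0\}\subseteq J\mathcal K$ is not established.
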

\begin{proof}
If $d^*(JA)=0$ and $JA$ is perpendicular to the image of $d^*$ from $\Omega^2_8$ to $\Omega^1$,
then $0=(JA,d^*\eta)=(dJA,\eta)=(\pi_8(dJA),\eta)$ for all $\eta\in\Omega^2_8$. So $\pi_8(dJA)=0$. Therefore,
\[-\frac{1}{3}(dd^*A)-\frac{1}{2}(\frac{1}{2}\alpha dJ\alpha dJA+3\alpha dJA)=\frac{1}{2}Jd^*\pi_8(dJA)=0.\]
So
\[0=-\frac{1}{3}d^*dd^*A+2d^*A\]
using the formula \[d^*\alpha dJ=d^*(J\alpha dJ+4J)J=-4d^*.\]
However, since $M$ is not isometric to 6-sphere, 6 is not an eigenvalue of the Laplacian operator on functions. Therefore, $d^*A=0$.
So \[\frac{1}{2}\alpha dJ\alpha dJA+3\alpha dJA=0.\]
In other words, $A$ can be written as $A=A_0+A_{-6}$, where $\alpha dJ A_0=0$ and $\alpha dJ A_{-6}=-6A_{-6}.$
As before, \[d^*A_0=d^*A_{-6}=d^*JA_0=d^*JA_{-6}=0\]
Since $\alpha dJA_0=0$,
\[3\int_M JA_0\wedge dJA_0\wedge\mathrm{Re}\Omega=0.\]
It also equals to
\[\int_M dJA_0\wedge dJA_0\wedge\omega=-||dJA_0||_{L^2}^2\]
because $dJA_0\in\Omega^2_8.$ So both $dJA_0$ and $d^*JA_0$ vanish. In other words, $JA_0$ is harmonic.
By Bochner technique, $A_0=0$. So $A=A_{-6}$ satisfies $d^*A=0$, $d^*(JA)=0$ and $\alpha dJ A=-6A.$ In other words, $A$ is a Killing vector field.

Conversely, by Proposition 3.19 of \cite{Foscolo}, any Killing vector field $K$ satisfies $\pi_8(dJK)=0$.
\end{proof}

  In general, any 3-form $\phi$ on a 7-manifold determines a bilinear form
 \[B(X,Y)=\frac{1}{6}(X\lrcorner\phi)\wedge(Y\lrcorner\phi)\wedge\phi.\]
 When it is positive definite, it determines a metric $g$ by
 \[B(X,Y)=g(X,Y)\mathrm{Vol}_{g}.\]
 It is natural to use notations like $*_{\phi}$ instead of $*_{g}$ because $g$ is determined by $\phi$. By the result of Fernandez and Gray \cite{FernandezGray}, a manifold has G$_2$ holonomy if and only if $d\phi=0$ and $d*_\phi\phi=0$.

 The 7-dimensional cone $CM$ is given by $CM=(M\times(0,1])\cup\{o\}$, where $o$ is the tip point. Let $r$ be the coordinate of the $(0,1]$ factor. There is a 3-form $\phi$ on $CM$ given by
 \[\phi=r^2 dr\wedge\omega+r^3\mathrm{Re}\Omega.\]
 It determines a metric \[g=dr^2+r^2 h,\]
 and a 4-form \[*_\phi\phi=-r^3dr\wedge\mathrm{Im}\Omega+r^4\frac{\omega^2}{2}.\]
 From now on, define $d_\phi, d^*_\phi$ as the exterior differential operator on $CM$ and its $g$ adjoint. Let $d$ and $d^*$ be the exterior differential operator on $M$ and its $h$ adjoint. Then $d_\phi=dr\wedge\frac{\partial}{\partial r}+d$. It follows that $d_\phi\phi=d_\phi*_\phi\phi=0$. In other words, the cone metric $g$ on $CM$ has G$_2$ holonomy.

 The G$_2$ structure provides a $g$-orthogonal decomposition of forms on $CM$.

 \[\Lambda^2\mathbb{R}^7=\Lambda^2_7\oplus\Lambda^2_{14},\]
 \[\Lambda^3\mathbb{R}^7=\Lambda^3_1\oplus\Lambda^3_7\oplus\Lambda^3_{27},\]
 where $\Lambda^2_7=\{X\lrcorner\phi\}, \Lambda^3_1=\mathbb{R}\phi, \Lambda^3_7=\{X\lrcorner*_\phi\phi\}$ and the orthogonal complements are $\Lambda^2_{14}$ and $\Lambda^3_{27}$. Let $X$ be a tangent vector on $M$, then \[X\lrcorner\phi=r^3X\lrcorner\mathrm{Re}\Omega-r^2 dr\wedge X\lrcorner \omega.\]
 $\Omega^2_7(CM)$ consists of the linear combination of them with
 $\frac{\partial}{\partial r}\lrcorner\phi=r^2\omega$. So any form in $\Omega^2_8(M)$ is perpendicular to $\Omega^2_7(CM)$. By direct calculation,
 \begin{align*}
 &(r^2 dr\wedge X\lrcorner\omega+\frac{1}{2}r^3X\lrcorner\mathrm{Re}\Omega,r^3Y\lrcorner\mathrm{Re}\Omega-r^2 dr\wedge Y\lrcorner\omega)_\phi\\
 &=-r^{2}(X\lrcorner\omega,Y\lrcorner\omega)_h+\frac{1}{2}r^{2}(X\lrcorner\mathrm{Re}\Omega,Y\lrcorner\mathrm{Re}\Omega)_h=0
 \end{align*}
 So \[\Omega_{14}^2(CM)=\{r^2 dr\wedge JX+\frac{1}{2}r^3(X\lrcorner\mathrm{Re}\Omega+\eta),(X,\eta)\in(\Omega^1\oplus\Omega^2_8)(M\times\{r\}),\forall r>0\}.\]

 Finally, define $t=-\ln r$. Choose large enough $T$ and define
 \[||f||_{C^{k,\alpha}_q}=\sup_{\tau\ge0}(T+\tau)^q||f||_{C^{k,\alpha}_{dt^2+h}(\{\tau<t<\tau+1\})},\]
 \[||f||_q=\sup_{\tau\ge0}(T+\tau)^q||f||_{L^2_{dt^2+h}(\{\tau<t<\tau+1\})}.\]

\section{Estimate for the linearized equation}

The first step to apply Adams and Simon's result \cite{AdamsSimon} is the computation of the linearization equation.
Let $\xi\in\Omega^2_{14,\phi}(CM)$. For any compactly supported vector field $X$,
\begin{align*}
(d_\phi\xi,X\lrcorner*_\phi\phi)_\phi&=-\int_{CM}d_\phi\xi\wedge X\wedge\phi\\
&=\int_{CM}\xi\wedge d_\phi X\wedge\phi\\
&=\int_{CM}d_\phi X\wedge\xi\wedge\phi\\
&=-\int_{CM}d_\phi X\wedge *_\phi\xi\\
&=-(d_\phi X,\xi)_\phi\\
&=-(X,d^*_\phi\xi)_\phi\\
&=-\frac{1}{4}(d^*_\phi\xi\lrcorner*_\phi\phi,X\lrcorner*_\phi\phi)_\phi.
\end{align*}
So \[\pi_{7}^\phi(d_\phi\xi)=-\frac{1}{4}d^*_\phi\xi\lrcorner*_\phi\phi.\]
For any compactly supported function $f$,
\[(d_\phi\xi,f\phi)_\phi=\int_{CM}d_\phi\xi\wedge f*_\phi\phi=-\int_{CM}\xi\wedge d_\phi f\wedge*_\phi\phi=0.\]
So \[\pi_{1}^\phi(d_\phi\xi)=0.\]
According to \cite{Joyce}, the linearization of $*_{\phi+d_\phi\xi}(\phi+d_\phi\xi)$ is
\begin{align*}
&*_\phi\phi+\frac{4}{3}*_\phi\pi_{1}^\phi(d_\phi\xi)+*_\phi\pi_{7}^\phi(d_\phi\xi)-*_\phi\pi_{27}^\phi(d_\phi\xi)\\
=&*_\phi\phi-*_\phi(d_\phi\xi)+\frac{7}{3}*_\phi\pi_{1}^\phi(d_\phi\xi)+2*_\phi\pi_{7}^\phi(d_\phi\xi)\\
=&*_\phi\phi-*_\phi(d_\phi\xi)-\frac{1}{2}*_\phi(d^*_\phi\xi\lrcorner*_\phi\phi)\\
=&*_\phi\phi-*_\phi(d_\phi\xi)+\frac{1}{2}d^*_\phi\xi\wedge\phi.
\end{align*}
So the linearization of $\pi_{14}^{\phi}((*_\phi d_\phi*_{\phi+d_\phi\xi}(\phi+d_\phi\xi)+\frac{3}{2}d_\phi d^*_\phi\xi)$ is
\begin{align*}
&\pi_{14}^{\phi}((-*_\phi d_\phi*_\phi(d_\phi\xi)+\frac{1}{2}*_\phi(d_\phi d^*_\phi\xi\wedge\phi)+\frac{3}{2}d_\phi d^*_\phi\xi)\\
=&\pi_{14}^{\phi}((d^*_\phi(d_\phi\xi)-\frac{1}{2}d_\phi d^*_\phi\xi+\frac{3}{2}d_\phi d^*_\phi\xi)\\
=&\pi_{14}^{\phi}(d^*_\phi d_\phi\xi+d_\phi d^*_\phi\xi)\\
=&d^*_\phi d_\phi\xi+d_\phi d^*_\phi\xi.
\end{align*}

Let $\mathcal{D}$ be the eigenspace of eigenvalue 12 of the Laplacian operator acting on co-closed forms in $\Omega^2_8(M)$. As an analogy of Lemma 2 of \cite{AdamsSimon}, the proof of the following lemma will be the main goal of this section:
\begin{lemma}
Suppose $q>0$, $||r^{-3}f||_{C^{k,\alpha}_q}<\infty$ and $f(t)\in\mathcal{D}^\perp\subset\Omega^2_{14,\phi}, \forall t\ge0$. Then there exists a solution $\xi(t)\in\mathcal{D}^\perp\subset\Omega^2_{14,\phi},t\ge0$ to \[r^2(d^*_\phi d_\phi\xi+d_\phi d^*_\phi\xi)=f.\] Moreover, $\xi$ satisfies
\[||r^{-3}\xi||_{C^{k+2,\alpha}_q}\le C||r^{-3}f||_{C^{k,\alpha}_q},\]
and boundary condition
\[(d^*_\phi\xi)|_{t=0}=0.\]
for some constant $C$ independent of $T$.
As a corollary,
\[\lim_{t_0\rightarrow\infty}||r^{-3}\xi||_{C^{k+2,\alpha}_{dt^2+h}(\{t_0<t<t_0+1\})}=0\]
\label{Perpendicular}
\end{lemma}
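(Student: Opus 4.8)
The strategy is to separate variables in $t$ using the eigenspace decomposition of the relevant Laplace-type operator on $\Omega^2_{14}$ restricted to slices $M\times\{r\}$, reduce the PDE to a family of decoupled second-order ODEs in $t$ indexed by eigenvalues, solve each ODE with the prescribed decay and boundary condition, and then reassemble and upgrade the $L^2$ estimate to a $C^{k,\alpha}$ estimate by elliptic regularity. First I would write $\xi$ and $f$ in the coordinate $t=-\ln r$, so that $g=dr^2+r^2h=r^2(dt^2+h)$ and the operator $r^2(d^*_\phi d_\phi+d_\phi d^*_\phi)$ becomes, in the cylindrical metric $dt^2+h$, an operator of the form $-\partial_t^2 + (\text{first order in }\partial_t) + \Delta_M + (\text{zeroth order})$ acting on the pair $(X,\eta)\in(\Omega^1\oplus\Omega^2_8)(M)$ that parametrizes $\Omega^2_{14,\phi}(CM)$ via the formula $r^2\,dr\wedge JX+\tfrac12 r^3(X\lrcorner\mathrm{Re}\,\Omega+\eta)$ from Section 2; the powers of $r$ produce the conical weight and hence the constant (non-$t$-dependent) coefficients. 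The key structural input is that the Hodge Laplacian on the link commutes with the $\mathrm{SU}(3)$-decomposition, so I get a discrete set of ``indicial'' ODEs, one for each eigenvalue $\lambda$ of the link operator.

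Next I would analyze the scalar model ODE. Each mode gives something like $-u'' + b u' + (\lambda - c)u = \tilde f(t)$ with constant $b,c$ depending on the form-degree bookkeeping, where $u(t)$ is the coefficient of an eigenform. The indicial roots are $\mu_\pm=\tfrac{b}{2}\pm\sqrt{\tfrac{b^2}{4}+\lambda-c}$. The whole point of removing $\mathcal{D}$ is that $\mathcal{D}$ is precisely the kernel mode where the relevant operator $d^*_\phi d_\phi+d_\phi d^*_\phi$ on $CM$ has a \emph{scale-invariant} (degree-$3$, i.e.\ $t$-independent after the $r^{-3}$ normalization) solution — this is the eigenvalue-$12$ coclosed piece in $\Omega^2_8(M)$ that matches the rate exponent — so on $\mathcal{D}^\perp$ both indicial roots are bounded away from the critical weight, the ODE is uniformly solvable, and one can write an explicit Green's-function/variation-of-parameters solution that decays like the forcing term. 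I would check that the decay $(T+\tau)^q$ with $q>0$ is preserved with a constant independent of $T$ (this uses $q>0$ so the weight is genuinely decaying and the Green's kernel is integrable against it uniformly in the base point). For the finitely many ``small'' eigenvalues (including $\lambda=0$ pieces coming from $\Omega^1$) I would impose the boundary condition $(d^*_\phi\xi)|_{t=0}=0$ to pin down the otherwise-free constant multiplying the decaying-toward-$t=0$ indicial solution; one has to verify this is exactly the right number of conditions, i.e.\ that $d^*_\phi\xi$ restricted to the slice is the natural complementary datum. I should also confirm the constructed $\xi$ stays in $\mathcal{D}^\perp\cap\Omega^2_{14,\phi}$: since the operator preserves the decomposition and $f\in\mathcal{D}^\perp$ by hypothesis, the projection onto $\mathcal{D}$ vanishes identically in $t$, and the $\Omega^2_7$-components are killed because the equation together with the boundary condition forces $d^*_\phi\xi$ into the co-closed sector — essentially the content of Propositions \ref{Eigenvalue18} and \ref{Imageofd*} on the link.

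Then I would pass from $L^2$ to $C^{k,\alpha}$: having the $\|\cdot\|_q$ (weighted $L^2$) estimate, apply interior Schauder estimates for the elliptic system on each cylinder piece $\{\tau<t<\tau+1\}$, using the overlap with neighboring pieces to absorb boundary terms, and near $t=0$ use the boundary Schauder estimate compatible with the Neumann-type condition $d^*_\phi\xi=0$; summing the local estimates with the weight $(T+\tau)^q$ gives $\|r^{-3}\xi\|_{C^{k+2,\alpha}_q}\le C\|r^{-3}f\|_{C^{k,\alpha}_q}$ with $C$ independent of $T$ (the $T$-independence is inherited from the ODE step since the cylinder is translation-invariant). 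The corollary $\lim_{t_0\to\infty}\|r^{-3}\xi\|_{C^{k+2,\alpha}(\{t_0<t<t_0+1\})}=0$ is then immediate: the weighted norm being finite with weight exponent $q>0$ forces each local norm to be $o((T+t_0)^{-q})\to0$.

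\textbf{Main obstacle.} The delicate point is the spectral gap: one must show that after excising exactly $\mathcal{D}$ (the eigenvalue-$12$ coclosed $\Omega^2_8$ modes), the linearized operator $r^2(d^*_\phi d_\phi + d_\phi d^*_\phi)$ on $CM$ has \emph{no} indicial root equal to the critical exponent $3$ on the orthogonal complement, so that the model ODEs are uniformly invertible with $T$-independent constants — and simultaneously that the boundary condition $d^*_\phi\xi|_{t=0}=0$ is exactly the right substitute for the Dirichlet-type condition in Lemma 2 of \cite{AdamsSimon}, neither over- nor under-determining the solution, and is compatible with staying in $\Omega^2_{14,\phi}$. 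Establishing this requires a careful eigenvalue count on $M$ combining the Moroianu–Semmelmann spectral data with Foscolo's identities already quoted in the excerpt; this is where almost all the work lies, and it is the reason the naive application of \cite{AdamsSimon} fails.
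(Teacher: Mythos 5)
Your overall plan — pass to the cylinder $t=-\ln r$, decompose by eigenvalue of a link operator, reduce to constant-coefficient ODEs in $t$ with indicial-root analysis, impose the boundary condition by adjusting the free constants multiplying decaying homogeneous solutions, identify $\mathcal{D}$ as the eigenvalue-$12$ piece that would give a vanishing indicial root, and finish with Schauder — is the right skeleton, and you have correctly flagged the spectral gap and the boundary condition as the two things that must be established. However, there is a genuine gap in the ``reduce to a family of decoupled second-order ODEs'' step. Because $\Omega^2_{14,\phi}(CM)$ is parametrized by a \emph{pair} $(X,\eta)\in\Omega^1(M)\oplus\Omega^2_8(M)$ via $\xi=r^2\,dr\wedge JX+\tfrac12 r^3(X\lrcorner\mathrm{Re}\,\Omega+\eta)$, the operator $r^2(d^*_\phi d_\phi+d_\phi d^*_\phi)$ becomes a genuinely \emph{coupled} system: the first equation contains $-d^*(X\lrcorner\mathrm{Re}\,\Omega+\eta)$ and the second contains $-4d(JX)$, and neither of these terms is diagonalized by eigenspaces of the Hodge Laplacian on the link. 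Decomposing by $\Delta_M$-eigenvalues alone does not decouple the system, and the model ODEs are not all scalar second-order. The paper resolves this with a careful sequence of reductions: first kill $d^*(JA)$ by a scalar Adams--Simon ODE and a divergence equation on $M$ (so one may assume $d^*(JA)=0$), then split off the Killing field part (eigenvalue $18$, handled as a finite-dimensional second-order ODE family), and then, for the $\mathcal{K}^\perp$ part, eliminate $d^*(X\lrcorner\mathrm{Re}\,\Omega+\eta)$ to obtain a \emph{fourth-order} ODE in $t$ for each mode $\phi_j$ of $\Delta$ on $\mathrm{Ker}\,d^*\cap J(\mathcal{K}^\perp)$, factored into two second-order operators with indicial roots $\gamma_j^\pm$; only after this is $\eta$ recovered and a final decoupled second-order ODE for the coclosed $\Omega^2_8$ part obtained. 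Your plan as stated would not produce this structure, and in particular would miss the role of Proposition~\ref{Eigenvalue18}: it is not used to keep the solution inside $\Omega^2_{14,\phi}$ (that is automatic from the parametrization), but to guarantee $\lambda_j\ne 18$, i.e.\ $\gamma_j^-\neq 0$, so the fourth-order ODE has no critical indicial root on $\mathcal{K}^\perp$.

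A secondary point: the $\mathcal{D}$ modes only appear in the very last step — the coclosed $\Omega^2_8$ ODE — and they are the unique place where an indicial root hits zero (eigenvalue $12$ gives $\gamma_j=0$). Your intuition about this is correct, but in the decoupled form you describe it is not visible that $\mathcal{D}$ is the \emph{only} problematic piece; one has to rule out all the other indicial roots (the eigenvalue-$18$ one in $\mathcal{K}^\perp$, the eigenvalue-$6$ one in the scalar step via $M\not\cong S^6$) separately, which is exactly what the paper's step-by-step reduction is organizing. Also note the paper works entirely in weighted $C^{k,\alpha}$ norms, dropping derivatives through the reductions and recovering them by elliptic regularity at the end, rather than passing through a weighted $L^2$ estimate as you suggest; this is a stylistic difference, not a gap, but it removes the need for a separate De Giorgi--Nash--Moser/Schauder bootstrap from $L^2$.
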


Lemma 2 of \cite{AdamsSimon} can not be applied directly because the boundary condition is in a different form. However, their method of proof can be combined with a long calculation in this section to produce a solution satisfying the boundary conditions.

The first step of the proof is writing the Laplacian operator on $CM$ in terms of operators in $M$. Recall that $d_\phi, d^*_\phi$ were defined as the exterior differential operator on $CM$ and its $g$ adjoint, $d$, $d^*$ were defined as the exterior differential operator on $M$ and its $h$ adjoint. So $d_\phi=dr\wedge\frac{\partial}{\partial r}+d$.

\[g=dr^2+r^2 h\]
\[\phi=r^2 dr\wedge\omega+r^3\mathrm{Re}\Omega\]
\[\xi=r^2 dr\wedge JX+\frac{1}{2}r^3(X\lrcorner\mathrm{Re}\Omega+\eta)\]
\[*_\phi\xi=r^6*(JX)+\frac{1}{2}r^5dr\wedge*(X\lrcorner\mathrm{Re}\Omega+\eta)\]
\[\begin{split}d_\phi*_\phi\xi=6r^5dr\wedge*(JX)+r^6dr\wedge\frac{\partial}{\partial r}*(JX)+r^6d*(JX)\\
-\frac{1}{2}r^5dr\wedge d*(X\lrcorner\mathrm{Re}\Omega+\eta)\end{split}\]
\[*_\phi d_\phi*_\phi\xi=-6r(JX)-r^2\frac{\partial}{\partial r}(JX)+dr\wedge*d*(JX)-\frac{1}{2}r*d*(X\lrcorner\mathrm{Re}\Omega+\eta)\]
\begin{align*}
d_\phi*_\phi d_\phi*_\phi\xi=&-6rdr\wedge\frac{\partial}{\partial r}(JX)-6rd(JX)-6dr\wedge JX-2rdr\wedge\frac{\partial}{\partial r}(JX)\\
&-r^2dr\wedge\frac{\partial^2}{\partial r^2}(JX)-r^2d\frac{\partial}{\partial r}(JX)-dr\wedge d*d*(JX)\\
&-\frac{1}{2}rdr\wedge*d*\frac{\partial}{\partial r}(X\lrcorner\mathrm{Re}\Omega+\eta)-\frac{1}{2}rd*d*(X\lrcorner\mathrm{Re}\Omega+\eta)\\
&-\frac{1}{2}dr\wedge*d*(X\lrcorner\mathrm{Re}\Omega+\eta)
\end{align*}
\begin{align*}
d_\phi\xi=&-r^2dr\wedge d(JX)+\frac{3}{2}r^2dr\wedge(X\lrcorner\mathrm{Re}\Omega+\eta)\\
&+\frac{1}{2}r^3dr\wedge\frac{\partial}{\partial r}(X\lrcorner\mathrm{Re}\Omega+\eta)+\frac{1}{2}r^3d(X\lrcorner\mathrm{Re}\Omega+\eta)
\end{align*}
\begin{align*}
*_\phi d_\phi\xi=&-r^4*d(JX)+\frac{3}{2}r^4*(X\lrcorner\mathrm{Re}\Omega+\eta)\\
&+\frac{1}{2}r^5*\frac{\partial}{\partial r}(X\lrcorner\mathrm{Re}\Omega+\eta)-\frac{1}{2}r^3dr\wedge*d(X\lrcorner\mathrm{Re}\Omega+\eta)
\end{align*}
\begin{align*}
d_\phi*_\phi d_\phi\xi=&-4r^3dr\wedge*d(JX)-r^4dr\wedge*d\frac{\partial}{\partial r}(JX)\\
&-r^4d*d(JX)+6r^3dr\wedge*(X\lrcorner\mathrm{Re}\Omega+\eta)\\
&+\frac{3}{2}r^4dr\wedge*\frac{\partial}{\partial r}(X\lrcorner\mathrm{Re}\Omega+\eta)+\frac{3}{2}r^4d*(X\lrcorner\mathrm{Re}\Omega+\eta)\\
&+\frac{5}{2}r^4dr\wedge*\frac{\partial}{\partial r}(X\lrcorner\mathrm{Re}\Omega+\eta)+\frac{1}{2}r^5dr\wedge*\frac{\partial^2}{\partial r^2}(X\lrcorner\mathrm{Re}\Omega+\eta)\\
&+\frac{1}{2}r^5d*\frac{\partial}{\partial r}(X\lrcorner\mathrm{Re}\Omega+\eta)+\frac{1}{2}r^3dr\wedge d*d(X\lrcorner\mathrm{Re}\Omega+\eta)
\end{align*}
\begin{align*}
*_\phi d_\phi*_\phi d_\phi\xi=&-4rd(JX)-r^2d\frac{\partial}{\partial r}(JX)+dr\wedge*d*d(JX)+6r(X\lrcorner\mathrm{Re}\Omega+\eta)\\
&+\frac{3}{2}r^2\frac{\partial}{\partial r}(X\lrcorner\mathrm{Re}\Omega+\eta)-\frac{3}{2}dr\wedge*d*(X\lrcorner\mathrm{Re}\Omega+\eta)\\
&+\frac{5}{2}r^2\frac{\partial}{\partial r}(X\lrcorner\mathrm{Re}\Omega+\eta)+\frac{1}{2}r^3\frac{\partial^2}{\partial r^2}(X\lrcorner\mathrm{Re}\Omega+\eta)\\
&-\frac{1}{2}rdr\wedge*d*\frac{\partial}{\partial r}(X\lrcorner\mathrm{Re}\Omega+\eta)+\frac{1}{2}r*d*d(X\lrcorner\mathrm{Re}\Omega+\eta)
\end{align*}
\begin{align*}
\triangle_\phi\xi=&d_\phi*_\phi d_\phi*_\phi\xi-*_\phi d_\phi*_\phi d_\phi\xi\\
=&dr\wedge[-6r\frac{\partial}{\partial r}(JX)-6JX-2r\frac{\partial}{\partial r}(JX)-r^2\frac{\partial^2}{\partial r^2}(JX)- d*d*(JX)\\
&-\frac{1}{2}r*d*\frac{\partial}{\partial r}(X\lrcorner\mathrm{Re}\Omega+\eta)-\frac{1}{2}*d*(X\lrcorner\mathrm{Re}\Omega+\eta)-*d*d(JX)\\
&+\frac{3}{2}*d*(X\lrcorner\mathrm{Re}\Omega+\eta)+\frac{1}{2}r*d*\frac{\partial}{\partial r}(X\lrcorner\mathrm{Re}\Omega+\eta)]-6rd(JX)\\
&-r^2d\frac{\partial}{\partial r}(JX)-\frac{1}{2}rd*d*(X\lrcorner\mathrm{Re}\Omega+\eta)+4rd(JX)+r^2d\frac{\partial}{\partial r}(JX)\\
&-6r(X\lrcorner\mathrm{Re}\Omega+\eta)-\frac{3}{2}r^2\frac{\partial}{\partial r}(X\lrcorner\mathrm{Re}\Omega+\eta)-\frac{5}{2}r^2\frac{\partial}{\partial r}(X\lrcorner\mathrm{Re}\Omega+\eta)\\
&-\frac{1}{2}r^3\frac{\partial^2}{\partial r^2}(X\lrcorner\mathrm{Re}\Omega+\eta)-\frac{1}{2}r*d*d(X\lrcorner\mathrm{Re}\Omega+\eta)\\
=&dr\wedge[-6JX-d*d*(JX)-*d*d(JX)\\
&-r^2\frac{\partial^2}{\partial r^2}(JX)-8r\frac{\partial}{\partial r}(JX)+*d*(X\lrcorner\mathrm{Re}\Omega+\eta)]\\
&+\frac{1}{2}r[-12(X\lrcorner\mathrm{Re}\Omega+\eta)-d*d*(X\lrcorner\mathrm{Re}\Omega+\eta)-*d*d(X\lrcorner\mathrm{Re}\Omega+\eta)\\
&-r^2\frac{\partial^2}{\partial r^2}(X\lrcorner\mathrm{Re}\Omega+\eta)-8r\frac{\partial}{\partial r}(X\lrcorner\mathrm{Re}\Omega+\eta)-4d(JX)]\\
  \end{align*}

  Let
  \[f=r^2 dr\wedge JA+\frac{1}{2}r^3(A\lrcorner\mathrm{Re}\Omega+B),\]
  then the main goal of this section is to solve the equations
  \[
\left\{ \begin{array}{l}
         (-6+dd^*+d^*d-r^2\frac{\partial^2}{\partial r^2}-8r\frac{\partial}{\partial r})(JX)-d^*(X\lrcorner\mathrm{Re}\Omega+\eta)=JA,\\
         (-12+dd^*+d^*d-r^2\frac{\partial^2}{\partial r^2}-8r\frac{\partial}{\partial r})(X\lrcorner\mathrm{Re}\Omega+\eta)-4d(JX)=A\lrcorner\mathrm{Re}\Omega+B,
         \end{array}\right.
\]
 such that
 \[||X||_{C^{k+2,\alpha}_q}+||\eta||_{C^{k+2,\alpha}_q}\le C(||A||_{C^{k,\alpha}_q}+||B||_{C^{k,\alpha}_q}),\]
\[-6(JX)-r\frac{\partial}{\partial r}(JX)+\frac{1}{2}d^*(X\lrcorner\mathrm{Re}\Omega+\eta)|_{r=1}=0.\]

After changing coordinate $r=e^{-t}$, the equations are reduced to

  \[
\left\{ \begin{array}{l}
         (-6+dd^*+d^*d-\frac{\partial^2}{\partial t^2}+7\frac{\partial}{\partial t})(JX)-d^*(X\lrcorner\mathrm{Re}\Omega+\eta)=JA,\\
         (-12+dd^*+d^*d-\frac{\partial^2}{\partial t^2}+7\frac{\partial}{\partial t})(X\lrcorner\mathrm{Re}\Omega+\eta)-4d(JX)=A\lrcorner\mathrm{Re}\Omega+B,\\
         ||X||_{C^{k+2,\alpha}_q}+||\eta||_{C^{k+2,\alpha}_q}\le C(||A||_{C^{k,\alpha}_q}+||B||_{C^{k,\alpha}_q}),\\
         -6(JX)+\frac{\partial}{\partial t}(JX)+\frac{1}{2}d^*(X\lrcorner\mathrm{Re}\Omega+\eta)|_{t=0}=0.
         \end{array}\right.
\]

There are several steps to achieve it

Step 1: Solve the equation
\[(-6+d^*d-\frac{\partial^2}{\partial t^2}+7\frac{\partial}{\partial t})f_1(t)=d^*(JA)(t).\]

By Lemma 2 of \cite{AdamsSimon}, since $\int_M d^*(JA)(t)=0$ for all $t$, it is possible to get a solution $f_1(t)$ satisfying $\int_M f_1(t)=0$ for all $t$.
Moreover,
\[||f_1||_{C^{k+1,\alpha}_q}\le C||d^*(JA)||_{C^{k-1,\alpha}_q}\le C||A||_{C^{k,\alpha}_q}.\]

Step 2: Since $\int_M f_1(t)=0$, it is possible to solve $d^*(JX_1)(t)=f_1(t)$ so that $||X_1||_{C^{k+1,\alpha}_q}\le C||f_1||_{C^{k+1,\alpha}_q}$. Write $X=X_1+X_2$, then the equations become
 \[
\left\{ \begin{array}{l}
         (-6+dd^*+d^*d-\frac{\partial^2}{\partial t^2}+7\frac{\partial}{\partial t})(JX_2)-d^*(X_2\lrcorner\mathrm{Re}\Omega+\eta)=JA_2\\
         (-12+dd^*+d^*d-\frac{\partial^2}{\partial t^2}+7\frac{\partial}{\partial t})(X_2\lrcorner\mathrm{Re}\Omega+\eta)-4d(JX_2)=A_2\lrcorner\mathrm{Re}\Omega+B_2
         \end{array}\right.
\]
Moreover, $d^*(JA_2)=0$, and $||A_2||_{C^{k-1,\alpha}_q}+||B_2||_{C^{k-1,\alpha}_q}\le C(||A||_{C^{k,\alpha}_q}+||B||_{C^{k,\alpha}_q}).$
The boundary condition is replaced by
\[-6(JX_2)+\frac{\partial}{\partial t}(JX_2)+\frac{1}{2}d^*(X_2\lrcorner\mathrm{Re}\Omega+\eta)|_{t=0}=G(0),\]
where $||G(0)||_{C^{k,\alpha}(M)}\le C(||A||_{C^{k,\alpha}_q}+||B||_{C^{k,\alpha}_q}).$

Step 3: Suppose $A_3=\pi_\mathcal{K}A_2(t,x)=\sum_iA_i(t)K_i(x)$ for the basis of Killing vector fields $K_i(x)$ on $M$ and some scalar functions $A_i(t)$. This step deals with equations
\[(-6+dd^*+d^*d-\frac{\partial^2}{\partial t^2}+7\frac{\partial}{\partial t})(X_i(t)JK_i(x))-d^*(X_i(t)K_i(x)\lrcorner\mathrm{Re}\Omega)=A_i(t)JK_i(x)\] such that
\[-6JX_i(t)K_i(x)+\frac{\partial}{\partial t}(JX_i(t)K_i(x))+\frac{1}{2}d^*(X_i(t)K_i(x)\lrcorner\mathrm{Re}\Omega)|_{t=0}=G_i(0)JK_i(x).\]
Since \[(dd^*+d^*d)(JK_i)-d^*(K_i\lrcorner\mathrm{Re}\Omega)=18JK_i+6JK_i,\]
The equations are reduced to \[(18-\frac{d^2}{d t^2}+7\frac{d}{d t})(X_i(t))=A_i(t)\]
with \[-9X_i(0)+X_i'(0)=G_i(0).\]

They can be solved by adjusting the coefficients $\alpha_i$ in
\[X_i(t)=\alpha_i e^{-2t}+e^{-2t}\int_0^t e^{11s}\int_s^\infty e^{-9\tau}A_i(\tau)d\tau ds\]
The solution $X_3=\sum_i X_i(t)K_i(x)$ satisfies \[||\sum_i X_i(t)K_i(x)||_{C^{k,\alpha}_q}\le C(||A||_{C^{k,\alpha}_q}+||B||_{C^{k,\alpha}_q}).\]

Step 4: Let $X_2=X_3+X_4\in\mathcal{K}\oplus\mathcal{K}^\perp$, $A_2=A_3+A_4\in\mathcal{K}\oplus\mathcal{K}^\perp$. This step deals with the equation

\begin{align*}
d^*d(JX_4)=&(-3+\frac{1}{4}d^*d-\frac{1}{4}\frac{\partial^2}{\partial t^2}+\frac{7}{4}\frac{\partial}{\partial t})d^*(X_4\lrcorner\mathrm{Re}\Omega+\eta)-\frac{1}{4}d^*(A_4\lrcorner\mathrm{Re}\Omega+B_2)\\
=&\frac{1}{4}(d^*d+dd^*-\frac{\partial^2}{\partial t^2}+7\frac{\partial}{\partial t}-12)(d^*d+dd^*-\frac{\partial^2}{\partial t^2}+7\frac{\partial}{\partial t}-6)(JX_4)\\
&-\frac{1}{4}(d^*d+dd^*-\frac{\partial^2}{\partial t^2}+7\frac{\partial}{\partial t}-12)(JA_4)-\frac{1}{4}d^*(A_4\lrcorner\mathrm{Re}\Omega+B_2)
\end{align*}

with
\[-6(JX_4)+\frac{\partial}{\partial t}(JX_4)+\frac{1}{2}(-6+dd^*+d^*d-\frac{\partial^2}{\partial t^2}+7\frac{\partial}{\partial t})(JX_4)|_{t=0}=\pi_{J(\mathcal{K}^\perp)}G(0).\]

By Proposition\ref{Imageofd*}, $d^*B_2\in\mathrm{Ker}d^*\cap J(\mathcal{K}^\perp)$.
Let $\phi_j$ be orthonormal eigenvectors of $\triangle$ on $\mathrm{Ker}d^*\cap J(\mathcal{K}^\perp)$ with eigenvalues $\lambda_j$.
 Let $JX_4=\sum_j w_j(t)\phi_j(x)$, and
\[f_j=(\frac{1}{4}(d^*d+dd^*-\frac{\partial^2}{\partial t^2}+7\frac{\partial}{\partial t}-12)(JA_4)+\frac{1}{4}d^*(A_4\lrcorner\mathrm{Re}\Omega+B_2),\phi_j).\]
The equation is reduced to
\begin{align*}f_j&=\frac{1}{4}(\lambda_j-\frac{d^2}{d t^2}+7\frac{d}{d t}-12)(\lambda_j-\frac{d^2}{d t^2}+7\frac{d}{d t}-6)w_j-\lambda_jw_j\\
&=\frac{1}{4}(\frac{d^2}{d t^2}-7\frac{d}{d t}-\lambda_j+9+\sqrt{4\lambda_j+9})(\frac{d^2}{d t^2}-7\frac{d}{d t}-\lambda_j+9-\sqrt{4\lambda_j+9})w_j,\end{align*}
with restriction on
\[-18w_j(0)+\lambda_j w_j(0)-w_j''(0)+9w_j'(0).\]

Let \[\gamma_j^{\pm}=-\frac{7}{2}+\frac{1}{2}(\sqrt{4\lambda_j+9}\pm2),\]
then the equations can be solved by adjusting the coefficients $\beta_j$ in
\[v_j(t)=-4e^{-\gamma_j^+ t}\int_0^t e^{(2\gamma_j^+ +7)s}\int_s^\infty e^{-(\gamma_j^+ +7)\tau}f_j(\tau)d\tau ds\]
\[w_j(t)=-e^{-\gamma_j^- t}\int_0^t e^{(2\gamma_j^- +7)s}\int_s^\infty e^{-(\gamma_j^- +7)\tau}v_j(\tau)d\tau ds+\beta_j e^{\frac{5-\sqrt{4\lambda_j+9}}{2}t},\]
if $\gamma_j^->0$,
\[w_j(t)=e^{-\gamma_j^- t}\int_t^\infty e^{(2\gamma_j^- +7)s}\int_s^\infty e^{-(\gamma_j^- +7)\tau}v_j(\tau)d\tau ds+\beta_j e^{\frac{5-\sqrt{4\lambda_j+9}}{2}t},\]
if $\gamma_j^-<0$.

Note that $5-\sqrt{4\lambda_j+9}<0$ because $\lambda_j\ge 5$ by Bochner technique.
By \cite{AdamsSimon}, it is possible to get a solution satisfying
 \[||X_4||_{C^{k,\alpha}_q}\le C(||A||_{C^{k,\alpha}_q}+||B||_{C^{k,\alpha}_q}),\]

as long as $\gamma_j^-\not=0$ on $\mathrm{Ker}d^*\cap J(\mathcal{K}^\perp)$, or equivalently $\lambda_j\not=18$. It is true by Proposition \ref{Eigenvalue18}.

Step 5: Solve $\eta_4\in\Omega^2_8\cap(\mathrm{Ker}d^*)^\perp$ satisfying
\[d^*\eta_4=(-6+dd^*+d^*d-\frac{\partial^2}{\partial t^2}+7\frac{\partial}{\partial t})(JX_4)-d^*(X_4\lrcorner\mathrm{Re}\Omega)-JA_4\]
The right hand side is co-coclosed and is perpendicular to $J\mathcal{K}$.
So the equation can be solved by proposition \ref{Imageofd*}.
Moreover,  \[||\eta_4||_{C^{k-2,\alpha}_q}\le C||d^*\eta_4||_{C^{k-2,\alpha}_q}\le C(||A||_{C^{k,\alpha}_q}+||B||_{C^{k,\alpha}_q}).\]

 Let $\eta=\eta_4+\eta_5$, then the equation is reduced to
\[(-12+dd^*+d^*d-\frac{\partial^2}{\partial t^2}+7\frac{\partial}{\partial t})\eta_5=B_5\]
with restriction $d^*\eta_5=0$ for some given $B_5\in\Omega^2_8$ satisfying $d^*B_5=0$.
Moreover \[||B_5||_{C^{k-4,\alpha}_q}\le C(||A||_{C^{k,\alpha}_q}+||B||_{C^{k,\alpha}_q}).\]

Let $\phi_j$ be orthonormal eigenvectors of $d^*d+dd^*$ on $\Omega^2_8\cap\mathrm{Ker}d^*$ with eigenvalues $\lambda_j$.
The equation is reduced to
\[f_j=(\frac{d^2}{d t^2}-7\frac{d}{d t}-\lambda_j+12)w_j,\]
where \[f_j=-(B_5,\phi_j).\]

Let \[\gamma_j=-\frac{7}{2}+\frac{1}{2}\sqrt{4\lambda_j+1},\]
then the equation can be solved by
\[w_j(t)=-e^{-\gamma_j t}\int_0^t e^{(2\gamma_j +7)s}\int_s^\infty e^{-(\gamma_j +7)\tau}f_j(\tau)d\tau ds,\]
if $\gamma_j>0$,
\[w_j(t)=e^{-\gamma_j t}\int_t^\infty e^{(2\gamma_j +7)s}\int_s^\infty e^{-(\gamma_j +7)\tau}f_j(\tau)d\tau ds,\]
if $\gamma_j<0$.
By \cite{AdamsSimon}, as long as $\gamma_j\not=0$ on $\mathcal{D}^\perp$, or equivalently $\lambda_j\not=12$, there exists a solution $\eta_5=\sum_j w_j\phi_j$ satisfying \[||\eta_5||_{C^{k-2,\alpha}_q}\le C(||A||_{C^{k,\alpha}_q}+||B||_{C^{k,\alpha}_q}).\] However, this is true by definition of $\mathcal{D}$.
In conclusion, it is possible to solve
$\xi\in\Omega^2_{14,\phi}$ on $t\ge0$ such that
\[r^2(d^*_\phi d_\phi\xi+d_\phi d^*_\phi\xi)=f,\]
\[||r^{-3}\xi||_{C^{k-2,\alpha}_q}\le C||r^{-3}f||_{C^{k,\alpha}_q},\]
and \[d^*_\phi\xi|_{t=0}=0.\]
By standard elliptic estimate,  \[||r^{-3}\xi||_{C^{k+2,\alpha}_q}\le C(||r^{-3}\xi||_{C^{k-2,\alpha}_q}+ C||r^{-3}f||_{C^{k,\alpha}_q})\le C||r^{-3}f||_{C^{k,\alpha}_q}.\]

\section{Calculation of obstruction term}
Another essential step to apply \cite{AdamsSimon} is the computation of the quadratic term $Q$ in $s$ in
\[r^{-1}\pi_\mathcal{D}(\pi_{14}^{\phi}(*_\phi d_\phi*_{\phi+sd_\phi(r^3\eta)}(\phi+sd_\phi(r^3\eta))+\tfrac{3}{2}d_\phi d^*_\phi(sr^3\eta))).\]
The proof of the following lemma is the main goal of this section:
\begin{lemma}
There exists $v\not=0\in\mathcal{D}$ such that $Q(v,v)=v$. Moreover, $Q(v,.)$ is a symmetric map from $\mathcal{D}$ to $\mathcal{D}$.
\label{Obstruction}
\end{lemma}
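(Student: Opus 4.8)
The plan is to reduce the computation of the obstruction term $Q$ to a concrete algebraic calculation on the flag manifold $M$, using the explicit description of the deformation space $\mathcal{D}$ coming from Moroianu–Semmelmann. Recall that $\mathcal{D}$, the eigenspace of eigenvalue $12$ of $\triangle$ on co-closed primitive $(1,1)$-forms, is precisely the $8$-dimensional space $\{\eta_\zeta : \zeta\in\mathfrak{su}(3)\}$ with $\eta_\zeta = v_1 e^{56}+v_2 e^{34}+v_3 e^{12}$. First I would plug $\phi+s\,d_\phi(r^3\eta)$ into $*_\phi$ and expand $*_{\phi+sd_\phi(r^3\eta)}(\phi+sd_\phi(r^3\eta))$ to second order in $s$; by the linearization already carried out in Section 3, the order-$s$ term is $s\,(*_\phi\phi)'$ whose $\pi_{14}^\phi$-projection of $*_\phi d_\phi$ of it vanishes on $\mathcal{D}$, so the leading nontrivial contribution is the order-$s^2$ term. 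Using Joyce's formula for the nonlinear dependence of $*_\phi\phi$ on $\phi$ (the quadratic correction to the metric from a $\Lambda^3_{27}$ perturbation), this $s^2$ term is an explicit quadratic expression in $d_\phi(r^3\eta)$, pointwise on $M$.

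Next I would contract this quadratic expression against the cone structure: since $\eta$ is $r$-independent in the natural trivialization and $d_\phi(r^3\eta) = 3r^2 dr\wedge\eta + r^3 d\eta$, with $d\eta$ given by the explicit formula in Section 2 in terms of the $x_i$, the whole computation descends to a fixed copy $M\times\{1\}$. The scaling by $r^{-1}$ and the projection $\pi_\mathcal{D}$ then produce a bilinear map $Q:\mathcal{D}\times\mathcal{D}\to\mathcal{D}$, which I would identify with an $\mathrm{SU}(3)$-equivariant symmetric bilinear map on $\mathfrak{su}(3)\cong\mathbb{R}^8$ (the adjoint representation). By Schur's lemma and the fact that $\mathrm{Sym}^2(\mathbb{R}^8)$ contains exactly one copy of the adjoint representation for $\mathfrak{su}(3)$ (coming from the symmetric structure constants $d_{abc}$), $Q$ must be a scalar multiple of the map $(\zeta_1,\zeta_2)\mapsto \zeta_1\cdot\zeta_2 - \tfrac13\operatorname{tr}(\zeta_1\zeta_2)I$ (the traceless symmetrized product). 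The symmetry of $Q(v,\cdot)$ is then automatic, and it remains only to pin down that the scalar constant is nonzero and to exhibit an eigenvector: taking $\zeta = \operatorname{diag}(i,i,-2i)/\sqrt{6}$ (or any element with $\zeta\cdot\zeta$ proportional to $\zeta$ modulo trace) gives $Q(v,v) = c\,v$ for an explicit $c$, and rescaling $v$ by $\sqrt{c}$ (assuming $c\neq0$) yields $Q(v,v)=v$.

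The main obstacle, and the part requiring genuine work rather than representation-theoretic formalism, is computing the single scalar constant $c$ — equivalently, verifying that $Q$ does not vanish identically. The equivariance argument guarantees $Q$ is a multiple of the unique equivariant model map, but it gives no information about whether that multiple is zero; for that one must actually carry out the pointwise quadratic expansion of $*_\phi\phi$, substitute the explicit $1$-forms $e^i$, $\eta$, $d\eta$, and compute one nonzero matrix element of $\pi_\mathcal{D}$ applied to the result — most cleanly by testing against one specific $\zeta$ where the functions $x_i, v_i$ simplify (for instance a diagonal $\zeta$, where several $x_i$ vanish). This is exactly the ``long calculation'' the paper has been building toward, and it is where the special geometry of the flag manifold (as opposed to a general nearly-Kähler $6$-manifold, where Foscolo's unobstructedness could fail) must enter. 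I would organize it by first computing $d_\phi(r^3\eta_\zeta)$ and its $\Lambda^3_{7}\oplus\Lambda^3_{27}$ decomposition with respect to $\phi$, then applying the quadratic term in Joyce's expansion, then extracting the $\eta_{\zeta'}$-component; tracking only the coefficient of one basis element $e^{12}$ (say) at one point of $M$ suffices to determine $c$.
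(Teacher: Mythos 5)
Your proposal takes essentially the same route as the paper. Both arguments begin by identifying $\mathcal{D}\cong\mathfrak{su}(3)$ via Moroianu--Semmelmann, then use the $1$-dimensionality of $\mathrm{Hom}_{\mathrm{SU}(3)}(\mathrm{Sym}^2\mathfrak{su}(3),\mathfrak{su}(3))$ to conclude that $Q$ is a scalar multiple of a single model map, observe that symmetry of $Q(v,\cdot)$ and the existence of a fixed eigenvector come for free from the model, and reduce the whole lemma to the verification that the scalar is nonzero by a direct expansion of $*_{\phi+sd_\phi(r^3\eta)}(\phi+sd_\phi(r^3\eta))$ to order $s^2$. The paper realizes the model map concretely as $Q_0(\eta,\eta)=*\pi_8(\eta\wedge\eta)$ under the identification $\mathfrak{su}(3)\cong\Lambda^2_8(\mathbb{R}^6)$, while you write it as the traceless symmetrized product (note this needs an extra factor of $i$ to land back in $\mathfrak{su}(3)$, since the anticommutator of anti-Hermitian matrices is Hermitian); these agree up to scalar. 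One place where your final step is imprecise: you cannot determine $c$ by ``tracking the coefficient of $e^{12}$ at one point of $M$,'' because $\pi_\mathcal{D}$ is an $L^2$ projection and the element to be projected is not itself in $\mathcal{D}$. The paper instead pairs the $s^2$-term against $r^3\eta$ and integrates over $M$, arriving at $\int_M\bigl(36(v_1^3+v_2^3+v_3^3)+12\,\mathrm{Re}\,(x_2+ix_1)(x_4+ix_3)(x_6+ix_5)\bigr)\mathrm{Vol}_h$, whose nonvanishing is quoted from Foscolo. So your plan to test against a specific $\zeta$ is correct, but the test is an integral over $M$, not a pointwise evaluation.
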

$Q$ is a linear map from $\mathrm{Sym}^2\mathcal{D}$ to $\mathcal{D}$.
The deformation space $\mathcal{D}=\mathfrak{su}(3)$.
As in \cite{Foscolo}, $Q$ belongs to the 1-dimensional space $\mathrm{Hom_{SU(3)}}(\mathrm{Sym}^2\mathfrak{su}(3),\mathfrak{su}(3))$, where $\mathrm{SU(3)}$ acts by $\mathrm{Ad}$. So it must be a multiple of the element $Q_0$ defined by \[Q_0(\eta,\eta)=*\pi_8(\eta\wedge\eta)\] using the following identification of $\mathfrak{su}(3)$ with $\Lambda^2_8(\mathbb{R}^6)$: \[H_1=h_1-h_2\rightarrow e_{12}-e_{34}, H_2=\frac{h_1+h_2-2h_3}{\sqrt{3}}\rightarrow \frac{e_{12}+e_{34}-2e_{56}}{\sqrt{3}},\]
\[e_1\rightarrow e_{13}+e_{24}, e_3\rightarrow e_{51}+e_{62}, e_5\rightarrow e_{35}+e_{46}\]
\[e_2\rightarrow e_{14}-e_{23}, e_4\rightarrow e_{52}-e_{61}, e_6\rightarrow e_{36}-e_{45}.\]
By direct calculation
\[Q_0(e_2+e_4+e_6,H_1)=-e_6+e_4, Q_0(e_2+e_4+e_6,H_2)=\frac{-e_6-e_4+2e_2}{\sqrt{3}},\]
\[Q_0(e_2+e_4+e_6,e_1)=-e_3-e_5, Q_0(e_2+e_4+e_6,e_3)=-e_5-e_1,\]
\[Q_0(e_2+e_4+e_6,e_5)=-e_1-e_3, Q_0(e_2+e_4+e_6,e_2)=e_4+e_6+\frac{2}{\sqrt{3}}H_2,\]
\[Q_0(e_2+e_4+e_6,e_4)=e_6+e_2+H_1-\frac{H_2}{\sqrt{3}}, Q_0(e_2+e_4+e_6,e_6)=e_2+e_4-H_1-\frac{H_2}{\sqrt{3}}.\]
So \[Q_0(e_2+e_4+e_6,e_2+e_4+e_6)=2(e_2+e_4+e_6).\]
Moreover, the map $Q_0(e_2+e_4+e_6,.)$ is symmetric.

Therefore, it suffices to show that $Q$ is a non-zero multiple of $Q_0$.

The term $\frac{3}{2}d_\phi d^*_\phi(sr^3\eta)$ is linear in $s$, so $Q$ is also the quadratic term in $s$ in \[r^{-1}\pi_\mathcal{D}(\pi_{14}^{\phi}(*_\phi d_\phi*_{\phi+sd_\phi(r^3\eta)}(\phi+sd_\phi(r^3\eta)))).\]
Therefore, it suffices to show that the quadratic term in $s$ in the integral
\[\int_M(*_\phi d_\phi*_{\phi+sd_\phi(r^3\eta)}(\phi+sd_\phi(r^3\eta)),r^3\eta)_\phi\mathrm{Vol}_h\]
is non-zero.
Let \[*_{\phi+sd_\phi(r^3\eta)}(\phi+sd_\phi(r^3\eta))=r^4A+r^3dr\wedge B.\]
Then \begin{align*}
&\int_M(*_\phi d_\phi*_{\phi+sd_\phi(r^3\eta)}(\phi+sd_\phi(r^3\eta)),r^3\eta)_\phi\mathrm{Vol}_h\\
&=\int_M\frac{r^3d_\phi(r^4A+r^3dr\wedge B)\wedge\eta}{\mathrm{Vol}_{\phi}}\mathrm{Vol}_h\\
&=\int_M\frac{r^3(r^4dA+4r^3dr\wedge A-r^3dr\wedge dB)\wedge\eta}{\mathrm{Vol}_{\phi}}\mathrm{Vol}_h\\
&=\int_M\frac{r^6dr\wedge(4A-dB)\wedge\eta}{\mathrm{Vol}_{\phi}}\mathrm{Vol}_h\\
&=\int_M\frac{r^6dr\wedge(4A\wedge\eta-B\wedge d\eta)}{\mathrm{Vol}_{\phi}}\mathrm{Vol}_h\\
&=\int_M\frac{(r^4A+r^3dr\wedge B)\wedge(4r^2dr\wedge\eta-r^3d\eta)}{\mathrm{Vol}_{\phi}}\mathrm{Vol}_h\\
&=\int_M\frac{*_{\phi+sd_\phi(r^3\eta)}(\phi+sd_\phi(r^3\eta))\wedge(4r^2dr\wedge\eta-r^3d\eta)}{\mathrm{Vol}_{\phi}}\mathrm{Vol}_h\\
&=\int_M(\phi+sd_\phi(r^3\eta),4r^2dr\wedge\eta-r^3d\eta)_{\phi+sd_\phi(r^3\eta)}\frac{\mathrm{Vol}_{\phi+sd_\phi(r^3\eta)}}{\mathrm{Vol}_{\phi}}\mathrm{Vol}_h.\\
\end{align*}

So it is necessary to compute the metric $g_{ij}$ induced by $\phi+sd(r^3\eta)$.
Let $e_0=r\frac{\partial}{\partial r}$ and $\tilde e_i=r^{-1}e_i$,
then $e^0=r^{-1}dr$ and $\tilde e^i=re^i$.
Since \[r^{-3}\phi=e^0\wedge\omega+\mathrm{Re}\Omega=e^{012}+e^{034}+e^{056}+e^{246}-e^{136}-e^{235}-e^{145},\]
\[B_{ij}=\frac{B(\tilde e_i,\tilde e_j)}{\mathrm{Vol}_\phi}=\frac{\tilde e_i\lrcorner(\phi+sd_\phi(r^3\eta))\wedge\tilde e_j\lrcorner(\phi+sd_\phi(r^3\eta))\wedge(\phi+sd_\phi(r^3\eta))}{6r^6 dr\wedge e^{123456}}\]
satisfy
\[B_{ij}=\delta_{ij}+sB_{ij}^{(1)}+s^2B_{ij}^{(2)}+O(s^3).\]
Moreover,
\[g_{ij}=g(\tilde e_i,\tilde e_j)=B_{ij}\det(B_{ij})^{-\frac{1}{9}}.\]
So
\[B_{ij}^{(1)}=\frac{e_i\lrcorner d_\phi(r^3\eta)\wedge e_j\lrcorner\phi\wedge\phi+e_i\lrcorner\phi\wedge e_j\lrcorner d_\phi(r^3\eta)\wedge\phi+e_i\lrcorner\phi\wedge e_j\lrcorner\phi\wedge d_\phi(r^3\eta)}{6r^8 dr\wedge e^{123456}}.\]
The term \[r^{-3}d_\phi(r^3\eta)=d\eta+3e^0\wedge\eta.\]
So
\[
B_{00}^{(1)}=\frac{3\eta\wedge\omega\wedge e^0\wedge\omega+\omega\wedge3\eta\wedge e^0\wedge\omega+\omega\wedge\omega\wedge 3e^0\wedge\eta}{6e^{0123456}}=0\]
because $\frac{\omega^2}{2}\wedge\eta=(v_1+v_2+v_3)e^{123456}=0.$

\begin{align*}
B_{0i}^{(1)}=&\frac{3\eta\wedge(e_i\lrcorner\mathrm{Re}\Omega-e^0\wedge e_i\lrcorner\omega)\wedge(e^0\wedge\omega+\mathrm{Re}\Omega)}{6e^{0123456}}\\
&+\frac{\omega\wedge(e_i\lrcorner d\eta-3e^0\wedge e_i\lrcorner\eta)\wedge(e^0\wedge\omega+\mathrm{Re}\Omega)}{6e^{0123456}}\\
&+\frac{\omega\wedge(e_i\lrcorner\mathrm{Re}\Omega-e^0\wedge e_i\lrcorner\omega)\wedge (d\eta+3e^0\wedge\eta)}{6e^{0123456}}\\
=&0,
\end{align*}
for $i=1,2,...6$
because by direct calculation
\[\eta\wedge\mathrm{Re}\Omega=\eta\wedge\omega\wedge e_i\lrcorner\mathrm{Re}\Omega=\omega\wedge\mathrm{Re}\Omega
=\omega^2\wedge e_i\lrcorner d\eta=\omega\wedge d\eta=0.\]
\begin{align*}
B_{ij}^{(1)}=&\frac{(e_i\lrcorner d\eta-3e^0\wedge e_i\lrcorner\eta)\wedge(e_j\lrcorner\mathrm{Re}\Omega-e^0\wedge e_j\lrcorner\omega)\wedge(e^0\wedge\omega+\mathrm{Re}\Omega)}{6e^{0123456}}\\
&+\frac{(e_i\lrcorner\mathrm{Re}\Omega-e^0\wedge e_i\lrcorner\omega)\wedge(e_j\lrcorner\mathrm{Re}\Omega-e^0\wedge e_j\lrcorner\omega)\wedge (d\eta+3e^0\wedge\eta)}{6e^{0123456}}\\
&+\frac{(e_i\lrcorner\mathrm{Re}\Omega-e^0\wedge e_i\lrcorner\omega)\wedge(e_j\lrcorner d\eta-3e^0\wedge e_j\lrcorner\eta)\wedge(e^0\wedge\omega+\mathrm{Re}\Omega)}{6e^{0123456}}\\
=&\frac{e_i\lrcorner d\eta\wedge e_j\lrcorner\mathrm{Re}\Omega \wedge\omega-e_i\lrcorner d\eta\wedge e_j\lrcorner\omega\wedge\mathrm{Re}\Omega-3e_i\lrcorner\eta\wedge e_j\lrcorner\mathrm{Re}\Omega\wedge\mathrm{Re}\Omega}{6e^{123456}}\\
&+\frac{-e_i\lrcorner\omega\wedge e_j\lrcorner\mathrm{Re}\Omega\wedge d\eta-e_i\lrcorner\mathrm{Re}\Omega\wedge e_j\lrcorner\omega\wedge d\eta+3e_i\lrcorner\mathrm{Re}\Omega\wedge e_j\lrcorner\mathrm{Re}\Omega\wedge\eta}{6e^{123456}}\\
&+\frac{-e_i\lrcorner\omega\wedge e_j\lrcorner d\eta\wedge\mathrm{Re}\Omega+e_i\lrcorner\mathrm{Re}\Omega\wedge e_j\lrcorner d\eta\wedge\omega-3e_i\lrcorner\mathrm{Re}\Omega\wedge e_j\lrcorner\eta\wedge\mathrm{Re}\Omega}{6e_{123456}}\\
=&\frac{e_i\lrcorner d\eta\wedge e_j\lrcorner\mathrm{Re}\Omega \wedge\omega-e_i\lrcorner d\eta\wedge e_j\lrcorner\omega\wedge\mathrm{Re}\Omega-3e_i\lrcorner\eta\wedge e_j\lrcorner\mathrm{Re}\Omega\wedge\mathrm{Re}\Omega}{2e^{123456}}
\end{align*}
for $i,j=1,2...6$ because \[\omega\wedge d\eta=d\eta\wedge\mathrm{Re}\Omega=0,\]
and \[X\lrcorner(A\wedge B)=(X\lrcorner A)\wedge B+(-1)^{|A|}A\wedge(X\lrcorner B).\]
So
\[B_{11}^{(1)}=0-0+3v_3=3v_3, B_{22}^{(1)}=0-0+3v_3=3v_3,\]
\[B_{12}^{(1)}=0-0-0=0,\]
\[B_{13}^{(1)}=\frac{x_6}{2}+\frac{x_6}{2}-0=x_6,B_{14}^{(1)}=\frac{x_5}{2}+\frac{x_5}{2}-0=x_5,\]
\[B_{16}^{(1)}=\frac{x_3}{2}+\frac{x_3}{2}-0=x_3, B_{24}^{(1)}=-\frac{x_6}{2}-\frac{x_6}{2}-0=-x_6.\]
After cyclic permutation
\[B_{33}^{(1)}=3v_2, B_{55}^{(1)}=3v_1, B_{44}^{(1)}=3v_2, B_{66}^{(1)}=3v_1,\]
\[B_{34}^{(1)}=0, B_{56}^{(1)}=0,\]
\[B_{35}^{(1)}=x_2, B_{51}^{(1)}=x_4, B_{36}^{(1)}=x_1, B_{52}^{(1)}=x_3,\]
\[B_{32}^{(1)}=x_5, B_{54}^{(1)}=x_1, B_{46}^{(1)}=-x_2, B_{62}^{(1)}=-x_4.\]

The following table can be obtained by the symmetry of $B_{ij}^{(1)}$:
\begin{center}
\begin{tabular}{ |c|c|c|c|c|c|c|c| }
 \hline
 $B_{ij}^{(1)}$ & 0 & 1 & 3 & 5 & 2 & 4 & 6 \\
 \hline
 0 & 0 & 0 & 0 & 0 & 0 & 0 & 0 \\
 \hline
 1 & 0 & $3v_3$ & $x_6$ & $x_4$ & 0 & $x_5$ & $x_3$ \\
 \hline
 3 & 0 & $x_6$ & $3v_2$ & $x_2$ & $x_5$ & 0 & $x_1$ \\
 \hline
 5 & 0 & $x_4$ & $x_2$ & $3v_1$ & $x_3$ & $x_1$ & 0 \\
 \hline
 2 & 0 & 0 & $x_5$ & $x_3$ & $3v_3$ & $-x_6$ & $-x_4$ \\
 \hline
 4 & 0 & $x_5$ & 0 & $x_1$ & $-x_6$ & $3v_2$ & $-x_2$ \\
 \hline
 6 & 0 & $x_3$ & $x_1$ & 0 & $-x_4$ & $-x_2$ & $3v_1$ \\
 \hline
\end{tabular}
\end{center}

Therefore \[\det(B_{ij})=1+6(v_1+v_2+v_3)s+O(s^2)=1+(\det(B_{ij}))^{(2)}s^2+O(s^3).\]
So \[g_{ij}=\delta_{ij}+B_{ij}^{(1)} s+(B_{ij}^{(2)}-\frac{1}{9}\delta_{ij}(\det B_{ij})^{(2)})s^2+O(s^3),\]
\[g^{ij}=\delta_{ij}-B_{ij}^{(1)} s+(B_{ik}^{(1)}B_{kj}^{(1)}-B_{ij}^{(2)}+\frac{1}{9}\delta_{ij}(\det B_{ij})^{(2)})s^2+O(s^3).\]

It is possible to compute $B_{ij}^{(2)}$ for $i,j=0,1,...6$. However, it is enough to compute $B_{ij}^{(2)}$ for $i,j=1,...6$ because they are the only terms in further calculation.
\begin{align*}
B_{ij}^{(2)}=&\frac{(e_i\lrcorner d\eta-3e^0\wedge e_i\lrcorner\eta)\wedge(e_j\lrcorner d\eta-3e^0\wedge e_j\lrcorner\eta)\wedge(e^0\wedge\omega+\mathrm{Re}\Omega)}{6e^{0123456}}\\
&+\frac{(e_i\lrcorner\mathrm{Re}\Omega-e^0\wedge e_i\lrcorner\omega)\wedge(e_j\lrcorner d\eta-3e^0\wedge e_j\lrcorner\eta)\wedge(d\eta+3e^0\wedge\eta)}{6e^{0123456}}\\
&+\frac{(e_i\lrcorner d\eta-3e^0\wedge e_i\lrcorner\eta)\wedge(e_j\lrcorner\mathrm{Re}\Omega-e^0\wedge e_j\lrcorner\omega)\wedge (d\eta+3e^0\wedge\eta)}{6e^{0123456}}\\
=&\frac{e_i\lrcorner d\eta\wedge e_j\lrcorner d\eta\wedge\omega-3e_i\lrcorner d\eta\wedge e_j\lrcorner\eta\wedge\mathrm{Re}\Omega-3e_i\lrcorner\eta\wedge e_j\lrcorner d\eta\wedge\mathrm{Re}\Omega}{6e^{123456}}\\
&+\frac{-e_i\lrcorner\omega\wedge e_j\lrcorner d\eta\wedge d\eta-3e_i\lrcorner\mathrm{Re}\Omega\wedge e_j\lrcorner\eta\wedge d\eta+3e_i\lrcorner\mathrm{Re}\Omega\wedge e_j\lrcorner d\eta\wedge\eta}{6e^{123456}}\\
&+\frac{-e_i\lrcorner d\eta\wedge e_j\lrcorner\omega\wedge d\eta+3e_i\lrcorner  d\eta\wedge e_j\lrcorner\mathrm{Re}\Omega\wedge\eta-3e_i\lrcorner\eta\wedge e_j\lrcorner\mathrm{Re}\Omega\wedge d\eta}{6e^{123456}}\\
=&\frac{e_i\lrcorner d\eta\wedge e_j\lrcorner d\eta\wedge\omega-3e_i\lrcorner d\eta\wedge e_j\lrcorner\eta\wedge\mathrm{Re}\Omega-3e_i\lrcorner\eta\wedge e_j\lrcorner d\eta\wedge\mathrm{Re}\Omega}{2e^{123456}}.
\end{align*}

So
\[B_{11}^{(2)}=-x_2^2-0-0=-x_2^2,\]
\[B_{22}^{(2)}=-x_1^2-0-0=-x_1^2,\]
\[B_{12}^{(2)}=x_1x_2-0-0=x_1x_2,\]
\[B_{13}^{(2)}=x_2x_4+\frac{3v_2x_6}{2}+\frac{3v_3x_6}{2}=x_2x_4-\frac{3v_1x_6}{2},\]
\[B_{14}^{(2)}=-x_2x_3+\frac{3v_2x_5}{2}+\frac{3v_3x_5}{2}=-x_2x_3-\frac{3v_1x_5}{2},\]
\[B_{16}^{(2)}=-x_2x_5+\frac{3v_1x_3}{2}+\frac{3v_3x_3}{2}=-x_2x_5-\frac{3v_2x_3}{2},\]
\[B_{24}^{(2)}=x_1x_3-\frac{3v_2x_6}{2}-\frac{3v_3x_6}{2}=x_1x_3+\frac{3v_1x_6}{2},\]

The following table can be obtained by cyclic permutation:

\begin{center}
\begin{tabular}{ |c|c|c|c|}
 \hline
 $B_{ip}^{(1)}B_{pj}^{(1)}-B_{ij}^{(2)}$  & 1 & 3 & 5\\
 \hline
 1  & $9v_3^2+\sum x_i^2-x_1^2$ & $x_1x_3-\frac{3}{2}x_6v_1$ & $x_5x_1-\frac{3}{2}x_4v_2$\\
 \hline
 3  & $x_1x_3-\frac{3}{2}x_6v_1$ & $9v_2^2+\sum x_i^2-x_3^2$ & $x_3x_5-\frac{3}{2}x_2v_3$ \\
 \hline
 5  & $x_5x_1-\frac{3}{2}x_4v_2$ & $x_3x_5-\frac{3}{2}x_2v_3$ & $9v_1^2+\sum x_i^2-x_5^2$\\
 \hline
 2  & $-x_1x_2$ & $x_3x_2-\frac{3}{2}x_5v_1$ & $x_5x_2-\frac{3}{2}x_3v_2$\\
 \hline
 4  & $x_1x_4-\frac{3}{2}x_5v_1$ & $-x_3x_4$ & $x_5x_4-\frac{3}{2}x_1v_3$\\
 \hline
 6  & $x_1x_6-\frac{3}{2}x_3v_2$ & $x_3x_6-\frac{3}{2}x_1v_3$ & $-x_5x_6$\\
 \hline
\end{tabular}
\end{center}

\begin{center}
\begin{tabular}{ |c|c|c|c|}
 \hline
 $B_{ip}^{(1)}B_{pj}^{(1)}-B_{ij}^{(2)}$  & 2 & 4 & 6\\
 \hline
 1  & $-x_1x_2$ & $x_1x_4-\frac{3}{2}x_5v_1$ & $x_1x_6-\frac{3}{2}x_3v_2$\\
 \hline
 3  & $x_3x_2-\frac{3}{2}x_5v_1$ & $-x_3x_4$ & $x_3x_6-\frac{3}{2}x_1v_3$\\
 \hline
 5  & $x_5x_2-\frac{3}{2}x_3v_2$ & $x_5x_4-\frac{3}{2}x_1v_3$ & $-x_5x_6$\\
 \hline
 2  & $9v_3^2+\sum x_i^2-x_2^2$ & $x_2x_4+\frac{3}{2}x_6v_1$ & $x_6x_2+\frac{3}{2}x_4v_2$\\
 \hline
 4  & $x_2x_4+\frac{3}{2}x_6v_1$ & $9v_2^2+\sum x_i^2-x_4^2$ & $x_4x_6+\frac{3}{2}x_2v_3$ \\
 \hline
 6  & $x_6x_2+\frac{3}{2}x_4v_2$ & $x_4x_6+\frac{3}{2}x_2v_3$ & $9v_1^2+\sum x_i^2-x_6^2$\\
 \hline
\end{tabular}
\end{center}

Let
\[d\eta+3e^0\wedge\eta=\frac{1}{6}(d\eta+3e^0\wedge\eta)_{ijk}e^{ijk},\]
\[4e^0\wedge\eta-d\eta=\frac{1}{6}(4e^0\wedge\eta-d\eta)_{ijk},\]
then \begin{align*}
&((d\eta+3e^0\wedge\eta,4e^0\wedge\eta-d\eta)_{r^{-3}\phi+sr^{-3}d_\phi(r^3\eta)}\frac{\mathrm{Vol}_{r^{-3}\phi+sr^{-3}d_\phi(r^3\eta)}}{\mathrm{Vol}_{r^{-3}\phi}})^{(1)}\\
=& \frac{1}{6}(d\eta+3e^0\wedge\eta)_{ijk}(4e^0\wedge\eta-d\eta)_{lmn}(-B_{il}^{(1)}\delta_{jm}\delta_{kn}-\delta_{il}B_{jm}^{(1)}\delta_{kn}-\delta_{il}\delta_{jm}B_{kn}^{(1)})\\
=& -\frac{1}{2}(d\eta+3e^0\wedge\eta)_{ijk}(4e^0\wedge\eta-d\eta)_{ljk}B_{il}^{(1)}\\
=& -\sum_{j<k}(d\eta+3e^0\wedge\eta)_{ijk}(4e^0\wedge\eta-d\eta)_{ljk}B_{il}^{(1)}.
\end{align*}
\begin{align*}
&(d\eta+3e^0\wedge\eta)_{i0k}(4e^0\wedge\eta-d\eta)_{l0k}B_{il}^{(1)}\\
&=36v_3^2+36v_3^2+36v_2^2+36v_2^2+36v_1^2+36v_1^2\\
&=72(v_1^2+v_2^2+v_3^2).
\end{align*}
\begin{align*}
&(d\eta+3e^0\wedge\eta)_{i12}(4e^0\wedge\eta-d\eta)_{l12}B_{il}^{(1)}\\
=&x_4(x_6B_{35}^{(1)}-x_5B_{36}^{(1)}-x_4B_{33}^{(1)}+x_3B_{34}^{(1)})\\
&-x_3(x_6B_{45}^{(1)}-x_5B_{46}^{(1)}-x_4B_{43}^{(1)}+x_3B_{44}^{(1)})\\
&-x_6(x_6B_{55}^{(1)}-x_5B_{56}^{(1)}-x_4B_{53}^{(1)}+x_3B_{54}^{(1)})\\
&+x_5(x_6B_{65}^{(1)}-x_5B_{66}^{(1)}-x_4B_{63}^{(1)}+x_3B_{64}^{(1)})\\
=&x_4(x_6x_2-x_5x_1-3x_4v_2)-x_3(x_6x_1+x_5x_2+3x_3v_2)\\
&-x_6(3x_6v_1-x_4x_2+x_3x_1)+x_5(-3x_5v_1-x_4x_1-x_3x_2)\\
=&2\mathrm{Re}(x_2+ix_1)(x_4+ix_3)(x_6+ix_5)-3v_2(x_3^2+x_4^2)-3v_1(x_5^2+x_6^2).
\end{align*}
By cyclic permutation,
\begin{align*}
&(d\eta+3e^0\wedge\eta)_{i34}(4e^0\wedge\eta-d\eta)_{l34}B_{il}^{(1)}\\
&=2\mathrm{Re}(x_2+ix_1)(x_4+ix_3)(x_6+ix_5)-3v_1(x_5^2+x_6^2)-3v_3(x_1^2+x_2^2),
\end{align*}
\begin{align*}
&(d\eta+3e^0\wedge\eta)_{i56}(4e^0\wedge\eta-d\eta)_{l56}B_{il}^{(1)}\\
&=2\mathrm{Re}(x_2+ix_1)(x_4+ix_3)(x_6+ix_5)-3v_3(x_1^2+x_2^2)-3v_2(x_3^2+x_4^2).
\end{align*}

The rest terms are
\begin{align*}
&(d\eta)_{i13}(-d\eta)_{l13}B_{il}^{(1)}+(d\eta)_{i14}(-d\eta)_{l14}B_{il}^{(1)}+(d\eta)_{i15}(-d\eta)_{l15}B_{il}^{(1)}\\
&+(d\eta)_{i16}(-d\eta)_{l16}B_{il}^{(1)}+(d\eta)_{i23}(-d\eta)_{l23}B_{il}^{(1)}+(d\eta)_{i24}(-d\eta)_{l24}B_{il}^{(1)}\\
&+(d\eta)_{i25}(-d\eta)_{l25}B_{il}^{(1)}+(d\eta)_{i26}(-d\eta)_{l26}B_{il}^{(1)}+(d\eta)_{i35}(-d\eta)_{l35}B_{il}^{(1)}\\
&+(d\eta)_{i36}(-d\eta)_{l36}B_{il}^{(1)}+(d\eta)_{i45}(-d\eta)_{l45}B_{il}^{(1)}+(d\eta)_{i46}(-d\eta)_{l46}B_{il}^{(1)}\\
=&-3(x_2^2v_2+x_4^2v_3)-3(x_2^2v_2+x_3^2v_3)-3(x_2^2v_1+x_6^2v_3)-3(x_2^2v_1+x_5^2v_3)\\
&-3(x_4^2v_3+x_1^2v_2)-3(x_3^2v_3+x_1^2v_2)-3(x_6^2v_3+x_1^2v_1)-3(x_5^2v_3+x_1^2v_1)\\
&-3(x_4^2v_1+x_6^2v_2)-3(x_4^2v_1+x_5^2v_2)-3(x_3^2v_1+x_6^2v_2)-3(x_3^2v_1+x_5^2v_2)
\end{align*}

In conclusion,
\begin{align*}
&\sum_{j<k}(d\eta+3e^0\wedge\eta)_{ijk}(4e^0\wedge\eta-d\eta)_{ljk}B_{il}^{(1)}\\
&=72(v_1^2+v_2^2+v_3^2)+6\mathrm{Re}(x_2+ix_1)(x_4+ix_3)(x_6+ix_5).
\end{align*}

\begin{align*}
&((r^{-3}\phi,4e^0\wedge\eta-d\eta)_{r^{-3}\phi+sr^{-3}d_\phi(r^3\eta)}\frac{\mathrm{Vol}_{r^{-3}\phi+sr^{-3}d_\phi(r^3\eta)}}{\mathrm{Vol}_{r^{-3}\phi}})^{(2)}\\
=& \frac{r^{-3}}{12}\phi_{ijk}(4e^0\wedge\eta-d\eta)_{ijk}(\det(g_{ij}))^{(2)}+\frac{r^{-3}}{2}\phi_{ijk}(4e^0\wedge\eta-d\eta)_{lmk}B_{il}^{(1)}B_{jm}^{(1)}\\
& +\frac{r^{-3}}{2}\phi_{ijk}(4e^0\wedge\eta-d\eta)_{ljk}(B_{ip}^{(1)}B_{pl}^{(1)}-B_{il}^{(2)}+\frac{1}{9}\delta_{il}(\det B_{ij})^{(2)})\\
=& \frac{r^{-3}}{2}\phi_{ijk}[(4e^0\wedge\eta-d\eta)_{lmk}B_{il}^{(1)}B_{jm}^{(1)}+(4e^0\wedge\eta-d\eta)_{ljk}(B_{ip}^{(1)}B_{pl}^{(1)}-B_{il}^{(2)})]\\
\end{align*}

because \[r^{-3}\phi_{ijk}(4e^0\wedge\eta-d\eta)_{ijk}=0.\]

The term $\frac{r^{-3}}{2}\phi_{ijk}(4e^0\wedge\eta-d\eta)_{lmk}B_{il}^{(1)}B_{jm}^{(1)}$ will be computed first:
\begin{align*}
&r^{-3}\phi_{120}(4e^0\wedge\eta-d\eta)_{lm0}B_{1l}^{(1)}B_{2m}^{(1)}\\
=&4v_3B_{11}^{(1)}B_{22}^{(1)}-4v_3B_{12}^{(1)}B_{21}^{(1)}+4v_2B_{13}^{(1)}B_{24}^{(1)}\\
&-4v_2B_{14}^{(1)}B_{23}^{(1)}+4v_1B_{15}^{(1)}B_{26}^{(1)}-4v_1B_{16}^{(1)}B_{25}^{(1)}\\
=&36v_3^3-4v_2x_6^2-4v_2x_5^2-4v_1x_4^2-4v_1x_3^2.
\end{align*}
By cyclic permutation,
\[r^{-3}\phi_{340}(4e^0\wedge\eta-d\eta)_{lm0}B_{1l}^{(1)}B_{2m}^{(1)}=36v_2^3-4v_1x_2^2-4v_1x_1^2-4v_3x_6^2-4v_3x_5^2.\]
\[r^{-3}\phi_{560}(4e^0\wedge\eta-d\eta)_{lm0}B_{1l}^{(1)}B_{2m}^{(1)}=36v_1^3-4v_3x_4^2-4v_3x_3^2-4v_2x_2^2-4v_2x_1^2.\]
\begin{align*}
&r^{-3}\phi_{246}(4e^0\wedge\eta-d\eta)_{lm6}B_{2l}^{(1)}B_{4m}^{(1)}\\
=&-x_4B_{23}^{(1)}B_{45}^{(1)}+x_4B_{25}^{(1)}B_{43}^{(1)}+x_3B_{24}^{(1)}B_{45}^{(1)}-x_3B_{25}^{(1)}B_{44}^{(1)}\\
&+x_2B_{21}^{(1)}B_{45}^{(1)}-x_2B_{25}^{(1)}B_{41}^{(1)}-x_1B_{22}^{(1)}B_{45}^{(1)}+x_1B_{25}^{(1)}B_{42}^{(1)}\\
&-x_5B_{23}^{(1)}B_{44}^{(1)}+x_5B_{24}^{(1)}B_{43}^{(1)}+x_5B_{21}^{(1)}B_{42}^{(1)}-x_5B_{22}^{(1)}B_{41}^{(1)}\\
=&-x_4x_5x_1-x_3x_6x_1-3x_3^2v_2-x_2x_3x_5-3x_1^2v_3-x_1x_3x_6-3x_5^2v_2-3x_5^2v_3.\\
\end{align*}
By cyclic permutation,
\begin{align*}
&r^{-3}\phi_{462}(4e^0\wedge\eta-d\eta)_{lm4}B_{6l}^{(1)}B_{2m}^{(1)}\\
&=-x_6x_1x_3-x_5x_2x_3-3x_5^2v_1-x_4x_5x_1-3x_3^2v_2-x_3x_5x_2-3x_1^2v_1-3x_1^2v_2.\\
\end{align*}
\begin{align*}
&r^{-3}\phi_{624}(4e^0\wedge\eta-d\eta)_{lm4}B_{6l}^{(1)}B_{2m}^{(1)}\\
&=-x_2x_3x_5-x_1x_4x_5-3x_1^2v_3-x_6x_1x_3-3x_5^2v_1-x_5x_1x_4-3x_3^2v_3-3x_3^2v_1.\\
\end{align*}
So
\begin{align*}
&r^{-3}\phi_{246}(4e^0\wedge\eta-d\eta)_{lm6}B_{2l}^{(1)}B_{4m}^{(1)}+r^{-3}\phi_{462}(4e^0\wedge\eta-d\eta)_{lm2}B_{4l}^{(1)}B_{6m}^{(1)}\\
&+r^{-3}\phi_{624}(4e^0\wedge\eta-d\eta)_{lm6}B_{2l}^{(1)}B_{4m}^{(1)}\\
=&4(-x_1x_3x_6-x_2x_3x_5-x_1x_4x_5)-3x_1^2v_3-3x_3^2v_2-3x_5^2v_1.\\
\end{align*}
Similarly
\begin{align*}
&r^{-3}\phi_{136}(4e^0\wedge\eta-d\eta)_{lm6}B_{1l}^{(1)}B_{3m}^{(1)}+r^{-3}\phi_{361}(4e^0\wedge\eta-d\eta)_{lm1}B_{3l}^{(1)}B_{6m}^{(1)}\\
&+r^{-3}\phi_{613}(4e^0\wedge\eta-d\eta)_{lm6}B_{1l}^{(1)}B_{3m}^{(1)}\\
=&4(x_2x_4x_6-x_2x_3x_5-x_1x_4x_5)-3x_2^2v_3-3x_4^2v_2-3x_5^2v_1.\\
\end{align*}
\begin{align*}
&r^{-3}\phi_{145}(4e^0\wedge\eta-d\eta)_{lm5}B_{1l}^{(1)}B_{4m}^{(1)}+r^{-3}\phi_{451}(4e^0\wedge\eta-d\eta)_{lm1}B_{4l}^{(1)}B_{5m}^{(1)}\\
&+r^{-3}\phi_{514}(4e^0\wedge\eta-d\eta)_{lm5}B_{1l}^{(1)}B_{4m}^{(1)}\\
=&4(x_2x_4x_6-x_1x_3x_6-x_2x_3x_5)-3x_2^2v_3-3x_3^2v_2-3x_6^2v_1.\\
\end{align*}
\begin{align*}
&r^{-3}\phi_{235}(4e^0\wedge\eta-d\eta)_{lm5}B_{2l}^{(1)}B_{3m}^{(1)}+r^{-3}\phi_{352}(4e^0\wedge\eta-d\eta)_{lm2}B_{3l}^{(1)}B_{5m}^{(1)}\\
&+r^{-3}\phi_{523}(4e^0\wedge\eta-d\eta)_{lm5}B_{2l}^{(1)}B_{3m}^{(1)}\\
=&4(x_2x_4x_6-x_1x_3x_6-x_1x_4x_5)-3x_1^2v_3-3x_4^2v_2-3x_6^2v_1.\\
\end{align*}

Adding all seven terms together,

\begin{align*}
&\frac{r^{-3}}{2}\phi_{ijk}(4e^0\wedge\eta-d\eta)_{lmk}B_{il}^{(1)}B_{jm}^{(1)}\\
=&36(v_1^3+v_2^3+v_3^3)-2v_1(x_5^2+x_6^2)-2v_2(x_3^2+x_4^2)-2v_3(x_1^2+x_2^2)\\
&+12\mathrm{Re}(x_2+ix_1)(x_4+ix_3)(x_6+ix_5).
\end{align*}

Then, the second term $\frac{r^{-3}}{2}\phi_{ijk}(4e^0\wedge\eta-d\eta)_{ljk}(B_{ip}^{(1)}B_{pl}^{(1)}-B_{il}^{(2)})$ will be computed:

\begin{align*}
&\frac{r^{-3}}{2}\phi_{0jk}(4e^0\wedge\eta-d\eta)_{ljk}(B_{0p}^{(1)}B_{pl}^{(1)}-B_{0l}^{(2)})\\
&=-(4e^0\wedge\eta-d\eta)_{l12}B_{0l}^{(2)}-(4e^0\wedge\eta-d\eta)_{l34}B_{0l}^{(2)}-(4e^0\wedge\eta-d\eta)_{l56}B_{0l}^{(2)}\\
&=0.\\
\end{align*}
\begin{align*}
&r^{-3}\phi_{i0k}(4e^0\wedge\eta-d\eta)_{l0k}(B_{ip}^{(1)}B_{pl}^{(1)}-B_{il}^{(2)})\\
&=4v_3(B_{1p}^{(1)}B_{p1}^{(1)}-B_{11}^{(2)})+4v_3(B_{2p}^{(1)}B_{p2}^{(1)}-B_{22}^{(2)})+4v_2(B_{3p}^{(1)}B_{p3}^{(1)}-B_{33}^{(2)})\\
&+4v_2(B_{4p}^{(1)}B_{p4}^{(1)}-B_{44}^{(2)})+4v_1(B_{5p}^{(1)}B_{p5}^{(1)}-B_{55}^{(2)})+4v_1(B_{6p}^{(1)}B_{p6}^{(1)}-B_{66}^{(2)})\\
&=4v_3(9v_3^2+\sum x_i^2-x_1^2)+4v_3(9v_3^2+\sum x_i^2-x_2^2)+4v_2(9v_2^2+\sum x_i^2-x_3^2)\\
&+4v_2(9v_2^2+\sum x_i^2-x_4^2)+4v_1((9v_1^2+\sum x_i^2-x_5^2))+4v_1(9v_3^2+\sum x_i^2-x_6^2)\\
&=72(v_1^2+v_2^2+v_3^2)-4v_1(x_5^2+x_6^2)-4v_2(x_3^2+x_4^2)-4v_3(x_1^2+x_2^2).
\end{align*}

\begin{align*}
&r^{-3}\phi_{246}(-d\eta)_{l46}(B_{2p}^{(1)}B_{pl}^{(1)}-B_{2l}^{(2)})\\
&=-x_3(B_{2p}^{(1)}B_{p5}^{(1)}-B_{25}^{(2)})-x_5(B_{2p}^{(1)}B_{p3}^{(1)}-B_{23}^{(2)})\\
&=-x_3(x_2x_5-\frac{3}{2}x_3v_2)-x_5(x_2x_3-\frac{3}{2}x_5v_1)
\end{align*}

So \begin{align*}
&r^{-3}\phi_{246}(-d\eta)_{l46}(B_{2p}^{(1)}B_{pl}^{(1)}-B_{2l}^{(2)})+r^{-3}\phi_{462}(-d\eta)_{l62}(B_{4p}^{(1)}B_{pl}^{(1)}-B_{4l}^{(2)})\\
&+r^{-3}\phi_{624}(-d\eta)_{l24}(B_{6p}^{(1)}B_{pl}^{(1)}-B_{6l}^{(2)})\\
=&2(-x_1x_3x_6-x_2x_3x_5-x_1x_4x_5)+3x_1^2v_3+3x_3^2v_2+3x_5^2v_1.\\
\end{align*}

Similarly,
\begin{align*}
&r^{-3}\phi_{136}(-d\eta)_{l36}(B_{1p}^{(1)}B_{pl}^{(1)}-B_{1l}^{(2)})+r^{-3}\phi_{361}(-d\eta)_{l61}(B_{3p}^{(1)}B_{pl}^{(1)}-B_{3l}^{(2)})\\
&+r^{-3}\phi_{613}(-d\eta)_{l13}(B_{6p}^{(1)}B_{pl}^{(1)}-B_{6l}^{(2)})\\
=&2(x_2x_4x_6-x_2x_3x_5-x_1x_4x_5)+3x_2^2v_3+3x_4^2v_2+3x_5^2v_1,\\
\end{align*}
\begin{align*}
&r^{-3}\phi_{145}(-d\eta)_{l45}(B_{1p}^{(1)}B_{pl}^{(1)}-B_{1l}^{(2)})+r^{-3}\phi_{451}(-d\eta)_{l51}(B_{4p}^{(1)}B_{pl}^{(1)}-B_{4l}^{(2)})\\
&+r^{-3}\phi_{514}(-d\eta)_{l14}(B_{5p}^{(1)}B_{pl}^{(1)}-B_{5l}^{(2)})\\
=&2(x_2x_4x_6-x_1x_3x_6-x_2x_3x_5)+3x_2^2v_3+3x_3^2v_2+3x_6^2v_1,\\
\end{align*}
\begin{align*}
&r^{-3}\phi_{235}(-d\eta)_{l35}(B_{2p}^{(1)}B_{pl}^{(1)}-B_{2l}^{(2)})+r^{-3}\phi_{352}(-d\eta)_{l52}(B_{3p}^{(1)}B_{pl}^{(1)}-B_{3l}^{(2)})\\
&+r^{-3}\phi_{523}(-d\eta)_{l23}(B_{5p}^{(1)}B_{pl}^{(1)}-B_{5l}^{(2)})\\
=&2(x_2x_4x_6-x_1x_3x_6-x_1x_4x_5)+3x_1^2v_3+3x_4^2v_2+3x_6^2v_1.\\
\end{align*}

Adding everything together
\begin{align*}
&\frac{r^{-3}}{2}\phi_{ijk}(4e^0\wedge\eta-d\eta)_{ljk}(B_{ip}^{(1)}B_{pl}^{(1)}-B_{il}^{(2)})\\
=&72(v_1^3+v_2^3+v_3^3)+2v_1(x_5^2+x_6^2)+2v_2(x_3^2+x_4^2)+2v_3(x_1^2+x_2^2)\\
&+6\mathrm{Re}(x_2+ix_1)(x_4+ix_3)(x_6+ix_5).
\end{align*}

In conclusion
\begin{align*}
&((\phi+sd_\phi(r^3\eta),4r^2dr\wedge\eta-r^3d\eta)_{\phi+sd_\phi(r^3\eta)}\frac{\mathrm{Vol}_{\phi+sd_\phi(r^3\eta)}}{\mathrm{Vol}_{\phi}})^{(2)}\\
=&-72(v_1^2+v_2^2+v_3^2)-6\mathrm{Re}(x_2+ix_1)(x_4+ix_3)(x_6+ix_5)\\
&+36(v_1^3+v_2^3+v_3^3)-2v_1(x_5^2+x_6^2)-2v_2(x_3^2+x_4^2)-2v_3(x_1^2+x_2^2)\\
&+12\mathrm{Re}(x_2+ix_1)(x_4+ix_3)(x_6+ix_5)\\
&+72(v_1^3+v_2^3+v_3^3)+2v_1(x_5^2+x_6^2)+2v_2(x_3^2+x_4^2)+2v_3(x_1^2+x_2^2)\\
&+6\mathrm{Re}(x_2+ix_1)(x_4+ix_3)(x_6+ix_5)\\
=&36(v_1^3+v_2^3+v_3^3)+12\mathrm{Re}(x_2+ix_1)(x_4+ix_3)(x_6+ix_5).
\end{align*}

As in \cite{Foscolo}, the integration is non-zero. In other words, Lemma \ref{Obstruction} has been proved.
\section{Proof of Main Theorem}

 The main goal of this section is to find out $w(t)$ such that
 \[\xi=7r^3((T+t)^{-1}v+w(t))\]
 satisfies the equation
 \[r^{-1}\pi_{14}^{\phi}(*_\phi d_\phi*_{\phi+d_\phi\xi}(\phi+d_\phi\xi)+\tfrac{3}{2}d_\phi d^*_\phi\xi)=0,\]
 with boundary conditions $d^*_\phi\xi|_{r=1}=0$.
 The method is essentially same as Adams-Simon. However, a detailed proof using contraction mapping theorem will be included.
 
 Write $w(t)=w^T(t)+w^\perp(t)\in\mathcal{D}\oplus\mathcal{D}^\perp$.
 The constant term is 0 because $\phi$ is closed and co-closed. The linear term is
 \begin{align*}
 &r^{-1}(d^*_\phi d_\phi+d_\phi d^*_\phi)(r^3(7(T+t)^{-1}v+w(t)))\\
 =&(-\frac{\partial^2}{\partial t^2}+7\frac{\partial}{\partial t})(7(T+t)^{-1}v+w^T(t))+r^{-1}(d^*_\phi d_\phi+d_\phi d^*_\phi)(r^3w^\perp(t))
 \end{align*}
 The rest terms are
 \[Q(7(T+t)^{-1}v+w^T,7(T+t)^{-1}v+w^T)+R^T_1(w)+R^T_2(w)+R^T_3(w)+R^\perp_4(w),\]
 where for $T$ large enough and $||w||_{C^{k+2,\alpha}}, ||w_1||_{C^{k+2,\alpha}}, ||w_2||_{C^{k+2,\alpha}}$ small enough,
 \[||R^T_1(w)||_{C^{k,\alpha}}\le C((T+t_0)^{-1}+||w||_{C^{k+2,\alpha}})||\frac{\partial}{\partial t}w^T||_{C^{k+1,\alpha}},\]
 \[||R^T_2(w)||_{C^{k,\alpha}}\le C((T+t_0)^{-1}+||w||_{C^{k+2,\alpha}})||w^\perp||_{C^{k+2,\alpha}},\]
 \[||R^T_3(w)||_{C^{k,\alpha}}\le C((T+t_0)^{-1}+||w||_{C^{k+2,\alpha}})^2((T+t_0)^{-1}+||w^T||_{C^{k+2,\alpha}}),\]
 \[||R^\perp_4(w)||_{C^{k,\alpha}}\le C((T+t_0)^{-1}+||w||_{C^{k+2,\alpha}})^2,\]
 \begin{align*}
 &||R^T_1(w_1)-R^T_1(w_2)||_{C^{k,\alpha}}\\
 \le& C(((T+t_0)^{-1}+||w_1||_{C^{k+2,\alpha}}+||w_2||_{C^{k+2,\alpha}})||\frac{\partial}{\partial t}(w_1^T-w_2^T)||_{C^{k+1,\alpha}}\\
 &+(||w_1-w_2||_{C^{k+2,\alpha}})(||\frac{\partial}{\partial t}w_1^T||_{C^{k+1,\alpha}}+||\frac{\partial}{\partial t}w_2^T||_{C^{k+1,\alpha}})),
 \end{align*}
  \begin{align*}
 &||R^T_2(w_1)-R^T_2(w_2)||_{C^{k,\alpha}}\\
 \le& C(((T+t_0)^{-1}+||w_1||_{C^{k+2,\alpha}}+||w_2||_{C^{k+2,\alpha}})||w_1^\perp-w_2^\perp||_{C^{k+2,\alpha}}\\
 &+||w_1-w_2||_{C^{k+2,\alpha}}(||w_1^\perp||_{C^{k+2,\alpha}}+||w_2^\perp||_{C^{k+2,\alpha}})),
 \end{align*}
 \begin{align*}
 &||R^T_3(w_1)-R^T_3(w_2)||_{C^{k,\alpha}}\\
 \le& C(((T+t_0)^{-1}+||w_1||_{C^{k+2,\alpha}}+||w_2||_{C^{k+2,\alpha}})^2||w_1-w_2||_{C^{k+2,\alpha}}
 \end{align*}
 \begin{align*}
 &||R^\perp_4(w_1)-R^\perp_4(w_2)||_{C^{k,\alpha}}\\
 \le& C((T+t_0)^{-1}+||w_1||_{C^{k+2,\alpha}}+||w_2||_{C^{k+2,\alpha}})||w_1-w_2||_{C^{k+2,\alpha}}.
 \end{align*}
 Here, the $C^{k,\alpha}$ norm means $C^{k,\alpha}_{dt^2+h}(t\in[t_0,t_0+1])$, and $C$ means a constant independent of $T$ and $t_0$.
 
 The equations are reduced to
 \[
\left\{ \begin{array}{l}
         r^{-1}(d^*_\phi d_\phi+d_\phi d^*_\phi)(r^3w^\perp)+R^\perp_4(w)=0,\\
         d^*_\phi w^\perp|_{t=0}=0,\\
         (-\frac{\partial^2}{\partial t^2}+7\frac{\partial}{\partial t})w^T+14(T+t)^{-1}Q(v,w^T)+R^T_1(w)\\
         +R^T_2(w)+R^T_3(w)+Q(w^T,w^T)-14(T+t)^{-3}v=0.
         \end{array}\right.
\]
By Lemma \ref{Perpendicular}, the first two equations can be solved if
$w^\perp=L_1(R^\perp_4(w))$ for some linear operator $L_1$ satisfying
\[||L_1f||_{C^{k+2,\alpha}_q}\le C||f||_{C^{k,\alpha}_q}\]
By Lemma 4 of \cite{AdamsSimon}, for all except a finite number of $q>0$, the last equation can be solved if
\[w^T=L_2(R^T_1(w)+R^T_2(w)+R^T_3(w)+Q(w^T,w^T)-14(T+t)^{-3}v)\] for some linear operator $L_2$ satisfying
\[||L_2f||_{C^{k+2,\alpha}_q}+||\frac{\partial}{\partial t}L_2f||_{C^{k+1,\alpha}_{1+q}}\le C||f||_{C^{k,\alpha}_{1+q}}.\]
The combination of the two equations can be rewritten as $w=F(w)$,
where \[F(w)=L_1(R^\perp_4(w))+L_2(R^T_1(w)+R^T_2(w)+R^T_3(w)+Q(w^T,w^T)-14(T+t)^{-3}v).\]
For all except a finite number of $1<q<3/2$, large enough $T$ and $w$ with small enough
\[||w||=||w^\perp||_{C^{k+2,\alpha}_{q+1/2}}+||w^T||_{C^{k+2,\alpha}_{q}}+||\frac{\partial}{\partial t}w^T||_{C^{k+1,\alpha}_{q+1}},\]
it is easy to see that
\[||F(w)||\le C((T+t)^{q-3/2}+(T+t)^{1-q}||w||)\]
and \[||F(w_1)-F(w_2)||\le C(T+t)^{1-q}||w_1-w_2||.\]
So $w=F(w)$ has solution with small $||w||$ by contraction mapping theorem.

After the solution of the equation, integration by parts using the boundary conditions provides
 \[0=\int_{B_1}d_\phi d^*_\phi\xi\wedge d_\phi d^*_\phi\xi\wedge\phi Vol_\phi=2||\pi_{7}^\phi d_\phi d^*_\phi\xi||_{L^2_\phi(B_1)}^2-||\pi_{14}^\phi d_\phi d^*_\phi\xi||_{L^2_\phi(B_1)}^2,\]
 and \[(*_\phi d_\phi*_{\phi+d_\phi \xi}(\phi+d_\phi\xi),d_\phi d^*_\phi\xi)_{L^2_\phi(B_1)}=0.\]
 So
 \begin{align*}
||\pi_{14}^\phi d_\phi d^*_\phi\xi||_{L^2_\phi(B_1)}^2
&=\frac{2}{3}||\pi_{14}^\phi*_\phi d_\phi*_{\phi+d_\phi\xi}(\phi+d_\phi\xi)||_{L^2_\phi(B_1)}||\pi_{14}^\phi d_\phi d^*_\phi\xi||_{L^2_\phi(B_1)}\\
&=-\frac{2}{3}(\pi_{14}^\phi*_\phi d_\phi*_{\phi+d_\phi\xi}(\phi+d_\phi\xi),\pi_{14}^\phi d_\phi d^*_\phi\xi)_{L^2_\phi(B_1)}\\
&=\frac{2}{3}(\pi_{7}^\phi*_\phi d_\phi*_{\phi+d_\phi\xi}(\phi+d_\phi\xi),\pi_{7}^\phi d_\phi d^*_\phi\xi)_{L^2_\phi(B_1)}\\
&\le \frac{\sqrt{2}}{3}||\pi_{7}^\phi*_\phi d_\phi*_{\phi+d_\phi\xi}(\phi+d_\phi\xi)||_{L^2_\phi(B_1)}||\pi_{14}^\phi d_\phi d^*_\phi\xi||_{L^2_\phi(B_1)}\\
\end{align*}
 It is well known that $d_\phi*_{\phi+d_\phi\xi}(\phi+d_\phi\xi)\in \Omega^{5,\phi+d_\phi\xi}_{14}$ \cite{Bryant}.
 So if $\xi$ is small enough, then $\pi_{14}^\phi(*_\phi d_\phi*_{\phi+d_\phi\xi}(\phi+d_\phi\xi))$ is much larger than $\pi_{7}^\phi(*_\phi d_\phi*_{\phi+d_\phi\xi}(\phi+d_\phi\xi))$. It means that $\pi_{14}^\phi d_\phi d^*_\phi\xi$ and therefore $d_\phi d^*_\phi\xi$ vanish. So $\pi_{14}^\phi(*_\phi d_\phi*_{\phi+d_\phi\xi}(\phi+d_\phi\xi))$ and therefore $*_\phi d_\phi*_{\phi+d_\phi\xi}(\phi+d_\phi\xi)$ also vanish. So $\phi+d_\phi\xi$ is both closed and co-closed. It induces a metric on $B_1\subset CM$ with G$_2$ holonomy whose rate of convergence to the cone metric is $(-\ln r)^{-1}$. Moreover, after integration by parts,
 \[||d^*_\phi\xi||_{L^2_\phi(B_1)}^2=(d_\phi d^*_\phi\xi,\xi)_{L^2_\phi(B_1)}=0.\]

 \end{document}